\title[Not large in HOD]{Large cardinals need not be large in HOD}
\author[Y.~Cheng]{Yong Cheng}
 \address[Y.~Cheng]
         {Department of Philosophy, Wuhan University, BaYi Road 299, Wuchang District, Wuhan, Hubei Province, P.R.China. 430072}
 \email{world-cyr@hotmail.com}
\author[S.-D.~Friedman]{Sy-David Friedman}
\address[S.-D. Friedman]{Kurt G\"odel Research Center,
                         University of Vienna,
                         W\"ahringer Strasse 25 A-1090 Vienna, Austria}
\email{sdf@logic.univie.ac.at}
\urladdr{http://www.logic.univie.ac.at/~sdf/}
\author[Hamkins]{Joel David Hamkins}
 \address[J.~D.~Hamkins]
         {Philosophy, New York University \&
         Mathematics, Philosophy, Computer Science, The Graduate Center of The City University of New York \&
         Mathematics, College of Staten Island of CUNY}
 \email{jhamkins@gc.cuny.edu}
 \urladdr{http://jdh.hamkins.org}
\thanks{The second author wishes to thank the Austrian Science Fund (FWF) for its support through project \# P23316-N13. The third author is thankful for the support of his research provided by Simons Foundation grant 209252, PSC-CUNY grant 66563-00 44 and grant 80209-06 20 from the CUNY Collaborative Incentive Research Award program. Commentary concerning this paper can be made at \url{http://jdh.hamkins.org/large-cardinals-need-not-be-large-in-hod}.}
\newtheorem{theorem}{Theorem}
\newtheorem{corollary}[theorem]{Corollary}
\newtheorem{lemma}[theorem]{Lemma}
\newtheorem{question}[theorem]{Question}
\newtheorem*{questions*}{Questions}
\newtheorem*{mainquestion*}{Main Question} 
\newtheorem{observation}[theorem]{Observation}
\newtheorem{conjecture}[theorem]{Conjecture}
\newcommand{\QED}{\end{proof}}
\def\proclaim[#1]{{\bf #1}}
\def\BF#1.{{\bf #1.}}
\newcommand{\Erdos}{Erd\H{o}s}
\renewcommand{\P}{{\mathbb P}}
\newcommand{\Q}{{\mathbb Q}}
\newcommand{\R}{{\mathbb R}}
\newcommand{\Gtail}{{G_{\!\scriptscriptstyle\rm tail}}}
\newcommand{\Ptail}{{\P_{\!\scriptscriptstyle\rm tail}}}
\newcommand{\Mbar}{{\overline{M}}}
\newcommand{\Vbar}{{\overline{V}}}
\newcommand{\Rdot}{{\dot\R}}
\newcommand{\Tdot}{{\dot T}}
\newcommand{\one}{\mathbbm{1}} 
\newcommand{\of}{\subseteq}
\newcommand{\set}[1]{\{\,{#1}\,\}}
\newcommand{\elesub}{\prec}
\newcommand{\Add}{\mathop{\rm Add}}
\newcommand{\restrict}{\upharpoonright} 
\newcommand{\satisfies}{\models}
\newcommand{\forces}{\Vdash}
\newcommand{\Union}{\bigcup}
\newcommand{\intersect}{\cap}
\newcommand{\smalllt}{\mathrel{\mathchoice{\raise2pt\hbox{$\scriptstyle<$}}{\raise1pt\hbox{$\scriptstyle<$}}{\raise0pt\hbox{$\scriptscriptstyle<$}}{\scriptscriptstyle<}}}
\newcommand{\smallleq}{\mathrel{\mathchoice{\raise2pt\hbox{$\scriptstyle\leq$}}{\raise1pt\hbox{$\scriptstyle\leq$}}{\raise1pt\hbox{$\scriptscriptstyle\leq$}}{\scriptscriptstyle\leq}}}
\newcommand{\lt}{\smalllt}
\newcommand{\ltkappa}{{{\smalllt}\kappa}}
\newcommand{\leqkappa}{{{\smallleq}\kappa}}
\newcommand{\leqtheta}{{{\smallleq}\theta}}
\newcommand{\boolval}[1]{\mathopen{\lbrack\!\lbrack}\,#1\,\mathclose{\rbrack\!\rbrack}}
\def\[#1]{\boolval{#1}}
\newbox\gnBoxA
\newdimen\gnCornerHgt
\newdimen\gnArgHgt
\def\gcode #1{%
\setbox\gnBoxA=\hbox{$#1$}%
\gnArgHgt=\ht\gnBoxA%
\ifnum     \gnArgHgt<\gnCornerHgt \gnArgHgt=0pt%
\else \advance \gnArgHgt by -\gnCornerHgt%
\fi \raise\gnArgHgt\hbox{\tiny$\ulcorner$} \box\gnBoxA %
\raise\gnArgHgt\hbox{\tiny$\urcorner$}}
\newcommand{\UnderTilde}[1]{{\setbox1=\hbox{$#1$}\baselineskip=0pt\vtop{\hbox{$#1$}\hbox to\wd1{\hfil$\sim$\hfil}}}{}}
\newcommand{\Undertilde}[1]{{\setbox1=\hbox{$#1$}\baselineskip=0pt\vtop{\hbox{$#1$}\hbox to\wd1{\hfil$\scriptstyle\sim$\hfil}}}{}}
\newcommand{\undertilde}[1]{{\setbox1=\hbox{$#1$}\baselineskip=0pt\vtop{\hbox{$#1$}\hbox to\wd1{\hfil$\scriptscriptstyle\sim$\hfil}}}{}}
\newcommand{\UnderdTilde}[1]{{\setbox1=\hbox{$#1$}\baselineskip=0pt\vtop{\hbox{$#1$}\hbox to\wd1{\hfil$\approx$\hfil}}}{}}
\newcommand{\Underdtilde}[1]{{\setbox1=\hbox{$#1$}\baselineskip=0pt\vtop{\hbox{$#1$}\hbox to\wd1{\hfil\scriptsize$\approx$\hfil}}}{}}
\newcommand{\st}{\mid}
\renewcommand{\iff}{\mathrel{\leftrightarrow}}
\newcommand{\Iff}{\mathrel{\Leftrightarrow}}
\newcommand{\iso}{\cong}
\def\<#1>{\left\langle#1\right\rangle}
\newcommand{\Ord}{\mathop{{\rm Ord}}}
\newcommand{\GCH}{{\rm GCH}}
\newcommand{\SCH}{{\rm SCH}}
\newcommand{\HOD}{{\rm HOD}}
\newcommand{\cell}[1]{\boxit{\hbox to 17pt{\strut\hfil$#1$\hfil}}}
\newcommand{\head}[2]{\lower2pt\vbox{\hbox{\strut\footnotesize\it\hskip3pt#2}\boxit{\cell#1}}}
\newcommand{\boxit}[1]{\setbox4=\hbox{\kern2pt#1\kern2pt}\hbox{\vrule\vbox{\hrule\kern2pt\box4\kern2pt\hrule}\vrule}}
\newcommand{\Col}[3]{\hbox{\vbox{\baselineskip=0pt\parskip=0pt\cell#1\cell#2\cell#3}}}
\newcommand{\tapenames}{\raise 5pt\vbox to .7in{\hbox to .8in{\it\hfill input: \strut}\vfill\hbox to
.8in{\it\hfill scratch: \strut}\vfill\hbox to .8in{\it\hfill output: \strut}}}
\newcommand{\Head}[4]{\lower2pt\vbox{\hbox to25pt{\strut\footnotesize\it\hfill#4\hfill}\boxit{\Col#1#2#3}}}
\newcommand{\Dots}{\raise 5pt\vbox to .7in{\hbox{\ $\cdots$\strut}\vfill\hbox{\ $\cdots$\strut}\vfill\hbox{\
$\cdots$\strut}}}
\newcommand{\df}{\it} 
\begin{document}

\begin{abstract}
We prove that large cardinals need not generally exhibit their large cardinal nature in \HOD. For example, a supercompact cardinal $\kappa$ need not be weakly compact in \HOD, and there can be a proper class of supercompact cardinals in $V$, none of them weakly compact in $\HOD$, with no supercompact cardinals in $\HOD$. Similar results hold for many other types of large cardinals, such as measurable and  strong cardinals.
\end{abstract}

\maketitle

\section{Introduction}

In this article, we shall prove that large cardinals need not generally exhibit their large cardinal nature in \HOD, the inner model of hereditarily ordinal-definable sets, and there can be a divergence in strength between the large cardinals of the ambient set-theoretic universe $V$ and those of $\HOD$. Our general theme concerns the questions:
\begin{question}\ \label{Question.HODquestion}
 \begin{enumerate}
  \item To what extent must a large cardinal in $V$ exhibit its large cardinal properties in \HOD?
  \item To what extent does the existence of large cardinals in $V$ imply the existence of large cardinals in $\HOD$?
 \end{enumerate}
\end{question}
For large cardinal concepts beyond the weakest notions, we prove, the answers are generally negative. In Theorem~\ref{Theorem.SupercompactNotWCinHOD}, for example, we construct a model with a supercompact cardinal that is not weakly compact in $\HOD$, and Theorem~\ref{Theorem.ProperClassSCNotLargeInHOD} extends this to a proper class of supercompact cardinals, none of which is weakly compact in $\HOD$, thereby providing some strongly negative instances of (1). The same model has a proper class of supercompact cardinals, but no supercompact cardinals in $\HOD$, providing a negative instance of (2). A natural strengthening of these situations would be a model with a proper class of supercompact cardinals, but no weakly compact cardinals in $\HOD$, but such a situation is impossible for reasons we discuss in Section~\ref{Section.Limitations}. Conjecture~\ref{Conjecture.Woodin} is an intriguing positive instance of (2) recently proposed by W. Hugh Woodin, and many other natural possibilities remain as open questions.

\section{Large cardinals that are not weakly compact in \HOD}\label{Section.IndividualLC}

Let us begin with an elementary case, showing that a measurable cardinal $\kappa$ need not be measurable in $\HOD$. We shall subsequently strengthen this in Theorem~\ref{Theorem.SupercompactNotWCinHOD} to show that even a supercompact cardinal need not be weakly compact in \HOD.

\begin{theorem}\label{Theorem.MeasurableNotMeasurableInHOD}
If $\kappa$ is measurable, then there is a forcing extension in which $\kappa$ is measurable, but not measurable in \HOD.
\end{theorem}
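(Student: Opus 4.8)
The plan is to force over a universe with $\kappa$ measurable so as to keep $\kappa$ measurable while arranging that $\HOD$ of the extension contains an ordinal-definable non-reflecting stationary subset of $\kappa$; since a weakly compact cardinal (in particular a measurable one) must satisfy stationary reflection, this shows $\kappa$ is not even weakly compact, hence not measurable, in $\HOD$. First I would reduce to the case that $\GCH$ holds in the ground model $V$, which is harmless since the standard reverse Easton forcing for $\GCH$ preserves measurability. I note at the outset that we cannot hope to violate the continuum pattern at $\kappa$ itself: from a mere measurable one cannot keep $\kappa$ measurable with $2^\kappa>\kappa^+$, and $(2^\gamma)^{\HOD}\le(2^\gamma)^{V[G]}$ always, so any coding into the continuum function must be placed where it does not disturb $2^\kappa$.

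Accordingly I would code the desired set into the continuum function at successor cardinals below $\kappa$. Fix the generic target: a set $S\subseteq\kappa$ that is to be a non-reflecting stationary set from the point of view of $\HOD$. The forcing $\P$ is a reverse Easton iteration of length $\kappa$ which (i) adjoins $S$ by the usual poset of bounded approximations, making $S$ a non-reflecting stationary subset of $\kappa\cap\Cof(\omega)$, (ii) records the characteristic function of $S$ into the pattern of $2^{\gamma^+}$ at successor cardinals $\gamma^+<\kappa$, so that $S=\{\gamma : 2^{\gamma^+}=\gamma^{+++}\}$ becomes ordinal-definable and hence lands in $\HOD$, and (iii) at a final stage shoots a club through $\kappa\minus S$, so that $S$ becomes \emph{non}-stationary in the full extension $V[G]$. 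Because the normal measure concentrates on inaccessibles, where $\GCH$ is untouched, the reflection computation still yields $2^\kappa=\kappa^+$, so the continuum coding poses no obstruction to measurability at $\kappa$.

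The two things to verify are then the preservation of measurability and the $\HOD$-computation of $S$. For the former I would lift the ultrapower embedding $j\colon V\to M$ to the extension by the usual master-condition and surgery techniques, exploiting that $\P$ is $\kappa$-c.c.\ (or suitably factors as small forcing followed by highly closed forcing in $M$), so that a generic for $j(\P)$ above $\kappa$ can be built in $V[G]$ and $j$ extended to $j\colon V[G]\to M[j(G)]$, witnessing that $\kappa$ remains measurable. For the latter, the key asymmetry is that $\HOD$ has fewer clubs than $V[G]$: the club through $\kappa\minus S$ added at the last stage is generic and weakly homogeneous, hence not ordinal-definable, so it does not enter $\HOD$, and the iteration is arranged so that no ordinal-definable club avoids $S$; thus $S$, while non-stationary in $V[G]$, is a genuine stationary and non-reflecting subset of $\kappa$ in $\HOD$. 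I expect the main obstacle to be exactly the tension in these last two points: the lifting of $j$ must be carried out with forcing homogeneous and definable enough that $\HOD$ still recognizes $S$ as stationary and non-reflecting, while the club killing the stationarity of $S$ in $V[G]$ must remain invisible to $\HOD$ and yet not interfere with the extension of the embedding.
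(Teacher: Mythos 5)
Your overall architecture---place an ordinal-definable witness to the failure of weak compactness into $\HOD$, then destroy that witness in the full extension by homogeneous forcing so that $\kappa$ can stay measurable---is not the route the paper takes for this theorem, though it closely parallels the paper's later arguments, where it is implemented with a $\kappa$-Suslin tree via Kunen's factorization $\Add(\kappa,1)\cong\mathbb{S}*\dot T$ rather than with a non-reflecting stationary set. That substitution is where your proof has a genuine gap. The statement ``$T$ is a $\kappa$-tree with no cofinal branch'' passes down to any inner model containing $T$, since branchlessness is downward absolute. The statement ``$S$ is a non-reflecting stationary set'' does not: stationarity of $S$ does pass down to $\HOD\subseteq V_1$ (an inner model has fewer clubs of $\kappa$), but \emph{non-reflection} requires that for each $\alpha<\kappa$ of uncountable cofinality there be a club of $\alpha$ disjoint from $S$ lying \emph{inside} $\HOD$. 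Those witnessing clubs live in the model over which $S$ was generically added; you have coded only $S$ itself into $\HOD$, so nothing rules out that $S\cap\alpha$ is stationary in $\HOD$ for some $\alpha$, in which case $S$ reflects there and your witness against weak compactness evaporates. Repairing this requires coding the witnessing clubs, or all of $P(\kappa)$ of the intermediate model, into $\HOD$ as well---and the latter is essentially the paper's actual proof of this theorem, which arranges that $\kappa$ is already non-measurable in an intermediate model $V[G][H]$, codes $P(\kappa)^{V[G][H]}$ into $\HOD$, and uses weak homogeneity of a final $\Add(\kappa,1)$ step to confine $\HOD$ inside $V[G][H]$, so that $\HOD$ and $V[G][H]$ agree on $P(\kappa)$ and neither has a measure.

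Two further steps are unjustified even granting the witness. First, you assert that the club-shooting quotient is weakly homogeneous; this is not a standard fact, and it is precisely what Kunen's lemma supplies in the tree case (the generic Suslin tree is constructed to be homogeneous), so without it you cannot conclude $\HOD^{V[G]}\subseteq V_1$ and hence cannot even get stationarity of $S$ in $\HOD$. Second, your lifting sketch passes over the real obstacle: in the model where only $S$ has been added, $\kappa$ is not even weakly compact, so no ultrapower embedding lifts there; the image $j(\mathbb{S})$ admits a master condition extending $S$ only because the club $C$ certifies that $S$ is non-stationary, so the embedding can be lifted only through the two steps jointly, and interleaving your coding forcing (which adds bounded subsets of $\kappa$) between them disrupts the dense-closed-subset argument that makes this work. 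The paper avoids all of this by coding above $\kappa$ with $\leq\kappa$-closed forcing.
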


\begin{proof}
Suppose that $\kappa$ is measurable in $V$, and assume without loss of generality, by forcing if necessary \cite{Jensen1974:MeasurableCardinalsAndTheGCH}, that the \GCH\ holds. Let $\P$ be the Easton-support $\kappa$-iteration, which forces with $\Q_\gamma=\Add(\gamma,1)$ at every inaccessible cardinal $\gamma\lt\kappa$. Suppose that $G\of\P$ is $V$-generic. The forcing is $\kappa$-c.c. and preserves the inaccessibility of $\kappa$. There is no forcing (yet) at stage $\kappa$.

We show first that $\kappa$ is not measurable in $V[G]$. If it were measurable there, then let $j:V[G]\to \Mbar$ be a normal ultrapower embedding. By elementarity, we may decompose $\Mbar=M[j(G)]$ as a forcing extension of the ground model $M=\Union_\alpha j(V_\alpha)$. Since $\kappa$ is inaccessible in $M$, there is nontrivial forcing at stage $\kappa$ in $j(\P)$, and so $j(G)=G*g*\Gtail$ for some $M[G]$-generic filter $g$ for $\Add(\kappa,1)^{M[G]}$. But since $P(\kappa)^V\of M$ and $\P$-names for subsets of $\kappa$ are coded in $H_{\kappa^+}$, it follows that $P(\kappa)^{V[G]}\of M[G]$. In particular, it must be that $g\in M[G]$, contrary to the assumption that $g$ is $M[G]$-generic.

Next, we perform additional forcing over $V[G]$. Let $\R$ be the set-support product forcing to code $P(\kappa)^{V[G]}$ into the \GCH\ or  $\Diamond^*_\lambda$ patterns (see Appendix) at regular cardinals above $\kappa^{++}$, with $\leqkappa$-closed forcing also preserving $P(P(\kappa))$. Suppose that $H\of\R$ is $V[G]$-generic, and then lastly let $g\of\kappa$ be $V[G][H]$-generic for $\Add(\kappa,1)$, which is the same in $V[G][H]$ as in $V[G]$. Our final model is $V[G][H][g]$.

We claim that $\kappa$ is not measurable in $\HOD^{V[G][H][g]}$. To see this, note first that because $\kappa$ is not measurable in $V[G]$ and $H$ preserves $P(P(\kappa))$, it follows that $\kappa$ is not measurable in $V[G][H]$. Next, observe that since $P(\kappa)^{V[G]}$, which is the same as $P(\kappa)^{V[G][H]}$, is explicitly coded in $V[G][H]$ and this coding is preserved by the subsequent forcing to add $g$ (using Lemma~\ref{Lemma.Diamond*InvariantBySmallForcing} in the case of $\Diamond^*$ coding), it follows that $P(\kappa)^{V[G][H]}\of\HOD^{V[G][H][g]}$. Meanwhile, since $\Add(\kappa,1)$ is weakly homogeneous (see appendix for definition), it follows by Lemma~\ref{Lemma.WeaklyHomogenousControlsHOD} that $\HOD^{V[G][H][g]}\of V[G][H]$. So $\HOD^{V[G][H][g]}$ and $V[G][H]$ agree on $P(\kappa)$, but there is no measure on $\kappa$ in $V[G][H]$, and consequently there can be none in $\HOD^{V[G][H][g]}$, because this model is contained in $V[G][H]$.

Meanwhile, $\kappa$ is measurable in $V[G][H][g]$. The standard master-condition lifting arguments show that $\kappa$ remains measurable in $V[G][g]$, and this will be preserved to $V[G][g][H]=V[G][H][g]$. Specifically, fix a normal ultrapower embedding $j:V\to M$, and consider the forcing $j(\P)$, which factors as $\P*\Q_\kappa*\Ptail$. Using $2^\kappa=\kappa^+$, one may construct an $M[G][g]$-generic filter $\Gtail\of\Ptail$ and lift the embedding to $j:V[G]\to M[j(G)]$ with $j(G)=G*g*\Gtail$. Using $g$ as a master condition in $j(\Q_\kappa)$, one similarly constructs an $M[j(G)]$-generic filter $j(g)\of j(\Q_\kappa)$ and lifts the embedding fully to $j:V[G][g]\to M[j(G)][j(g)]$, witnessing that $\kappa$ is measurable in $V[G][g]$. Next, since $\R$ is $\leqkappa$-closed and $\Add(\kappa,1)$ is $\kappa^+$-c.c. in $V[G]$, it follows by Lemma~\ref{Lemma.ClosureDistributive} that $\R$ remains $\leqkappa$-distributive in $V[G][g]$. Thus, the forcing to add $H$ over $V[G][g]$ adds no new subsets of $\kappa$ not already in $V[G][g]$, and so $\kappa$ remains measurable in $V[G][g][H]$, which is the same as $V[G][H][g]$, as desired.

We have therefore produced a model $V[G][H][g]$ in which $\kappa$ is measurable, but not measurable in \HOD, as desired.
\end{proof}

In fact, we could easily have coded much more of $V[G]$ than just $P(\kappa)$. We could have coded some large $V_\theta[G]$, for example, or indeed, we could let $\R$ be the proper class $\leqkappa$-closed forcing to code all the sets of $V[G]$ into the \GCH\ or $\Diamond^*$ patterns, with the result by homogeneity that $\HOD^{V[G][H][g]}=V[G]$, where $\kappa$ is not measurable, though it is measurable in $V[G][H][g]$ as above. Meanwhile, let us now modify the proof in order to provide a stronger result:

\begin{theorem}
If $\kappa$ is a measurable cardinal, then there is a forcing extension in which $\kappa$ remains measurable, but is not weakly compact in $\HOD$.
\end{theorem}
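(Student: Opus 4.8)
The plan is to reuse the architecture of the previous proof, replacing the final homogeneous forcing $\Add(\kappa,1)$ by a two-step forcing that first installs a witness to the failure of weak compactness and then restores measurability. The guiding observation is that, for inaccessible $\kappa$, weak compactness is equivalent to the tree property, which is a statement quantifying only over $P(\kappa)$: a $\kappa$-tree and each of its cofinal branches is coded by a subset of $\kappa$. Hence weak compactness is absolute between any two models that agree on $P(\kappa)$ and in which $\kappa$ is inaccessible. Since the sandwich $P(\kappa)^{U}\of\HOD^{W}\of U$ of the previous proof forces $\HOD^W$ and the intermediate model $U$ to agree on $P(\kappa)$, it suffices to arrange that $\kappa$ fails to be weakly compact in $U$, i.e.\ that $U$ carries a $\kappa$-Aronszajn tree that is coded into $\HOD$ and acquires no cofinal branch there.

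The witness must be a $\kappa$-Suslin tree rather than, say, a non-reflecting stationary set: any witness to non-weak-compactness that survived all further forcing would also prevent $\kappa$ from being measurable in the final model, whereas a Suslin tree can be threaded, and threading it restores the tree property. Concretely, over $V[G]$ I would force with $\bbS$ to add a homogeneous $\kappa$-Suslin tree $T$, so that $\kappa$ is not weakly compact in $V[G][T]$; then, exactly as before, let $\R$ be the $\leqkappa$-closed coding forcing, now coding both $P(\kappa)^{V[G][T]}$ and the tree $T$ into the \GCH\ or $\Diamond^*$ patterns above $\kappa^{++}$, with generic $H$; and finally thread $T$ by forcing with $T$ itself, with generic branch $b$. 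The final model is $W=V[G][T][H][b]$.

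Two features make this work. For measurability, I would use the standard fact that adding a Suslin tree and then forcing with it is equivalent to adding a single Cohen subset, $\bbS*\Tdot\iso\Add(\kappa,1)$; since $\R$ is $\leqkappa$-closed and $T$ is $\kappa$-c.c., these two commute by Easton's lemma, so $W$ may be rewritten as $V[G][g][H']$ for an $\Add(\kappa,1)$-generic $g$ followed by a $\leqkappa$-distributive coding $H'$. This is precisely the configuration of the previous proof, so the master-condition lifting argument there, together with Lemma~\ref{Lemma.ClosureDistributive}, shows $\kappa$ is measurable in $W$. For the \HOD\ computation, the coding places $P(\kappa)^{V[G][T]}$ and $T$ into $\HOD^W$, while weak homogeneity of the tree-forcing $T$ — guaranteed because $T$ was chosen level-homogeneous, so that $\Aut(T)$ acts transitively on each level — lets Lemma~\ref{Lemma.WeaklyHomogenousControlsHOD} bound $\HOD^W\of V[G][T][H]$. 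Since $\R$ is $\leqkappa$-closed it adds no cofinal branch to $T$, so $T$ is Aronszajn in $V[G][T][H]$ and hence, as $\HOD^W$ and $V[G][T][H]$ agree on $P(\kappa)$, also in $\HOD^W$; thus $\kappa$ is not weakly compact there.

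The main obstacle is the tension between wanting $T\in\HOD^W$ and wanting $\HOD^W$ to lie below the model in which $T$ has no branch: the tree cannot simply be extracted from inside $\Add(\kappa,1)$ and coded afterward, since coding the generic, non-symmetric tree would destroy the very homogeneity on which the \HOD\ bound depends. This forces the tree to be added by a separate, earlier forcing and then threaded, and it is the threading step whose homogeneity is delicate — hence the need for a level-homogeneous Suslin tree, so that forcing with $T$ is weakly homogeneous and Lemma~\ref{Lemma.WeaklyHomogenousControlsHOD} applies. The remaining care is in verifying that the commuting rearrangement genuinely identifies the two presentations of $W$ (mutual genericity of $H$ and $b$ over $V[G][T]$) and that $\bbS$ can indeed be taken to add a homogeneous tree with $\bbS*\Tdot\iso\Add(\kappa,1)$.
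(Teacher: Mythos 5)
Your proposal is correct and follows essentially the same route as the paper: both factor $\Add(\kappa,1)$ as first adding a weakly homogeneous $\kappa$-Suslin tree $T$ and then threading it, code $T$ into the \GCH\ or $\Diamond^*$ pattern by $\leqkappa$-closed forcing mutually generic with the branch, and use the homogeneity of the tree forcing to trap $\HOD$ inside the branchless intermediate model. The only inessential differences are that the paper arranges preservation of measurability via a preparatory indestructibility assumption rather than by rerunning the master-condition argument, and that your extra coding of $P(\kappa)$ is not needed, since $T\in\HOD$ together with $\HOD\of V[T][H]$ already rules out a cofinal branch in $\HOD$.
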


\begin{proof}
Suppose that $\kappa$ is a measurable cardinal in $V$. By preparatory forcing, if necessary, we may assume that the measurability of $\kappa$ is indestructible by $\Add(\kappa,1)$. (This can be accomplished, for example, by first forcing $2^\kappa=\kappa^+$ and then performing the Easton-support iteration that adds a Cohen subset to every inaccessible cardinal up to and including $\kappa$; alternatively, one can use the lottery preparation \cite{Hamkins2000:LotteryPreparation}.) Consider now the forcing $\Add(\kappa,1)$ to add a Cohen subset to $\kappa$, and suppose that $g\of\kappa$ is the resulting $V$-generic Cohen subset. By Lemma~\ref{Lemma.Kunen}, the forcing $\Add(\kappa,1)$ may be factored as a two-step forcing iteration $\SS*\Tdot$, where the first step $\SS$ is the forcing to add a weakly homogeneous $\kappa$-Suslin tree $T$ and the second step $\Tdot$ simply forces with that tree, adding a branch through it. In our case, we may factor the extension $V[g]$ as $V[T][b]$, where $g\iso T*b$, first adding the weakly homogeneous $\kappa$-Suslin tree $T$ and then adding a branch $b$ through $T$. Since $T$ is a $\kappa$-Suslin tree in $V[T]$, it follows that $\kappa$ is not weakly compact there.

In $V[T]$, the tree $T$ has size $\kappa$ and so there is some set $E\of\kappa$ that codes $T$ in some canonical way. Let $\R$ be the Easton product forcing in $V[T]$ that codes $E$ into the \GCH\ or $\Diamond^*_\lambda$ patterns on the next $\kappa$ many regular cardinals above $\kappa$. Suppose that $H\of\R$ is $V[T]$-generic. This is equivalent to assuming $H$ is $V[g]$-generic, since $T$ is still $\kappa$-Suslin in $V[T][H]$, and so $b$ is $V[T][H]$-generic (as any cofinal branch through a Suslin tree is), making $b$ and $H$ mutually $V[T]$-generic. Thus, because $\R\times T\iso T\times\R$ in $V[T]$, we may view our final model as any of the three forcing iterations $V[T][H][b]=V[T][b][H]=V[g][H]$.

By our indestructibility assumption on $\kappa$, it follows that $\kappa$ is measurable in $V[g]$, and the forcing $\R$ is $\leqkappa$-closed in $V[T]$ and consequently remains $\leqkappa$-distributive in $V[g]$ by Lemma~\ref{Lemma.ClosureDistributive}. Since $\R$ does not add new subsets to $\kappa$, it follows that $\kappa$ remains measurable in $V[g][H]$.

But consider now $\HOD^{V[g][H]}$, which is the same as $\HOD^{V[T][H][b]}$. Since $\R$ forces to code $T$ explicitly into the \GCH\ or $\Diamond^*$ pattern, we know that $T\in\HOD^{V[g][H]}$. Meanwhile, since $T$ is weakly homogeneous in $V[T]$, it remains so in $V[T][H]$ because the automorphisms are still there, and consequently $\HOD^{V[g][H]}=\HOD^{V[T][H][b]}\of V[T][H]$ by Lemma~\ref{Lemma.WeaklyHomogenousControlsHOD}. Since $T$ is a $\kappa$-Suslin tree in $V[T]$ and hence in $V[T][H]$, as this model has the same subsets of $\kappa$ as $V[T]$, it follows that $\HOD^{V[g][H]}$ can have no branch through $T$, and so $\kappa$ does not have the tree property in $\HOD^{V[g][H]}$. Thus, $\kappa$ is not weakly compact in $\HOD^{V[g][H]}$, while it is measurable in $V[g][H]$, as desired.
\end{proof}

This argument generalizes to other large cardinals that can be made indestructible by $\Add(\kappa,1)$ (or even merely resurrectible after this forcing) and the coding forcing, allowing us to increase the gap in strength between the property exhibited by the large cardinal and the property it exhibits in $\HOD$. Let us illustrate in the case of a supercompact cardinal.

\begin{theorem}\label{Theorem.SupercompactNotWCinHOD}
 If $\kappa$ is a supercompact cardinal, then there is a forcing extension in which $\kappa$ remains supercompact, but is not weakly compact in  $\HOD$.
\end{theorem}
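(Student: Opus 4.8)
The plan is to run essentially the same construction as in the previous theorem, making only two changes: replace the preparatory forcing that rendered measurability indestructible with the Laver preparation, and replace the distributivity-based preservation step with an appeal to indestructibility. First I would force, if necessary, to make the supercompactness of $\kappa$ indestructible by $\ltkappa$-directed closed forcing, using Laver's theorem. This is precisely the right hypothesis for our purposes, since both $\Add(\kappa,1)$ and the coding forcing to follow are $\ltkappa$-directed closed; moreover, because the preparation is itself $\ltkappa$-directed closed, the indestructibility it provides is preserved through each subsequent $\ltkappa$-directed closed forcing step.

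Next, exactly as before, I would force with $\Add(\kappa,1)$ to obtain a generic Cohen subset $g\of\kappa$ and factor the extension as $V[g]=V[T][b]$ by Lemma~\ref{Lemma.Kunen}, where $T$ is a weakly homogeneous $\kappa$-Suslin tree and $b$ is a branch through it, so that $\kappa$ is not weakly compact in $V[T]$. In $V[T]$ I fix $E\of\kappa$ coding $T$ and let $\R$ be the $\leqkappa$-closed Easton product coding $E$ into the $\GCH$ or $\Diamond^*_\lambda$ pattern on the next $\kappa$ regular cardinals above $\kappa$, with $H\of\R$ generic. As in the previous proof, $T$ remains $\kappa$-Suslin in $V[T][H]$, so $b$ is $V[T][H]$-generic, $\R\times T\iso T\times\R$, and the final model may be written $V[T][H][b]=V[T][b][H]=V[g][H]$.

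The crux is to verify that $\kappa$ remains supercompact in $V[g][H]$, and this is where the argument genuinely departs from the measurable case: the distributivity argument of Lemma~\ref{Lemma.ClosureDistributive} does not suffice, because $\leqkappa$-distributive forcing may add new subsets of $P_\kappa(\lambda)$ for $\lambda$ above the coding and thereby spoil a supercompactness measure. Instead I would argue via indestructibility. Using the commutativity $\R\times T\iso T\times\R$ over $V[T]$ together with $\SS*\Tdot\iso\Add(\kappa,1)$, the forcing producing $V[g][H]$ over $V$ rearranges as $\Add(\kappa,1)*\dot\R$, where $\dot\R$ is interpreted over $V[g]$; since $\Add(\kappa,1)$ is $\ltkappa$-directed closed and $\R$ is $\leqkappa$-closed with all its conditions living at cardinals above $\kappa$ (so that its lower bounds are witnessed by unions and survive the $\kappa$-c.c.\ branch forcing passing into $V[g]$), this two-step iteration is again $\ltkappa$-directed closed. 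By the indestructibility arranged at the outset, $\kappa$ is therefore supercompact in $V[g][H]$. The non-weak-compactness in $\HOD$ then goes through verbatim as before: the coding ensures $T\in\HOD^{V[g][H]}$, weak homogeneity of $T$ yields $\HOD^{V[g][H]}\of V[T][H]$ by Lemma~\ref{Lemma.WeaklyHomogenousControlsHOD}, and since $T$ is still $\kappa$-Suslin in $V[T][H]$ there can be no branch through $T$ in $\HOD^{V[g][H]}$, so $\kappa$ fails to have the tree property and hence is not weakly compact there. The main obstacle, and indeed the only real difference from the preceding proof, is exactly this supercompactness-preservation step, which is what dictates the switch from a distributivity argument to the Laver-indestructibility/directed-closure argument.
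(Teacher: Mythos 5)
Your proposal is correct and follows essentially the same route as the paper's proof: Laver indestructibility under $\ltkappa$-directed closed forcing, the Kunen factorization $V[g]=V[T][b]$, coding $T$ above $\kappa$, observing that $\R$ remains $\ltkappa$-directed closed in $V[g]$ because small subsets of $\R$ there already lie in $V[T]$, so that $\Add(\kappa,1)*\dot\R$ is $\ltkappa$-directed closed over $V$ and indestructibility applies, with the $\HOD$ argument unchanged. The paper's preservation step is exactly this appeal to directed closure of the combined iteration rather than to Easton's lemma, so there is nothing to add.
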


\begin{proof}
Suppose that $\kappa$ is a supercompact cardinal. By preparatory forcing, if necessary, we may assume without loss of generality that the supercompactness of $\kappa$ is indestructible by $\ltkappa$-directed forcing over $V$. We now force over $V$ to add a $V$-generic Cohen set $g\of\kappa$. As above, we may factor this forcing as $\Add(\kappa,1)\cong\SS*\Tdot$, first adding a weakly homogeneous $\kappa$-Suslin tree and then forcing with the tree. So the extension $V[g]$ can be viewed as $V[T][b]$, where $T$ is the generic $\kappa$-Suslin tree that is added and $b$ is the $V[T]$-generic branch through $T$. Let $\R$ be the forcing in $V[T]$ to code (a code for) $T$ into the \GCH\ or $\Diamond^*_\lambda$ patterns on the next $\kappa$ many regular cardinals above $\kappa$. Suppose that $H\of\R$ is $V[T]$-generic, which as before is equivalent to assuming that $H$ is $V[g]$-generic. We may view the extension $V[g][H]$ as $V[T][H][b]$. Since $\R$ is $\ltkappa$-directed closed in $V[T]$, it is also $\ltkappa$-directed closed in $V[g]$, since any subset of $\R$ of size less than $\kappa$ in $V[g]$ is in $V[T]$. Altogether, therefore, the forcing $\Add(\kappa,1)*\R$ is $\ltkappa$-directed closed in $V$, and so our indestructibility assumption on $\kappa$ ensures that $\kappa$ is supercompact in $V[g][H]$.

Meanwhile, we claim that $\kappa$ is not weakly compact in $\HOD^{V[g][H]}$. The tree $T$ is in $\HOD^{V[g][H]}$, since we explicitly forced to encode it. But since $V[g][H]=V[T][H][b]$ and $T$ is weakly homogeneous in $V[T]$ and hence in $V[T][H]$, it follows by Lemma~\ref{Lemma.WeaklyHomogenousControlsHOD} that $\HOD^{V[g][H]}\of V[T][H]$. Since the forcing to add $H$ adds no new subsets of $T$, it follows that $T$ has no cofinal branches in $V[T][H]$ and hence none in $\HOD^{V[g][H]}$, and so the tree property fails for $\kappa$ there, which means that $\kappa$ is not weakly compact in $\HOD^{V[g][H]}$, as desired.
\end{proof}

Let us now provide with Theorem~\ref{Theorem.GeneralVersionIndividualCardinal} a general template for the method, allowing us to extend the phenomenon to many other large cardinals. For this purpose, define that a property $\phi(\kappa)$ is {\df coding compatible}, if after forcing to add a Cohen subset $g\of\kappa$, then for any particular $E\of\kappa$ in $V[g]$ there is some further forcing $\R_E\in V[E]$, such that (i) forcing with $\R_E$ over $V[E]$ does not add subsets to $\kappa$; (ii) forcing with $\R_E$ over $V[g]$ makes $E$ ordinal definable; and (iii) forcing with $\R_E$ over $V[g]$ forces $\phi(\kappa)$. It will turn out that many large cardinal properties can be made coding compatible.

\begin{theorem}\label{Theorem.GeneralVersionIndividualCardinal}
 Suppose that the property $\phi(\kappa)$ is coding compatible. Then there is a forcing extension $V[g][H]$ in which $\phi(\kappa)$ holds, but $\kappa$ is not weakly compact in $\HOD^{V[g][H]}$.
\end{theorem}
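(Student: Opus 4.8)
The plan is to abstract the argument of Theorem~\ref{Theorem.SupercompactNotWCinHOD}, with the three clauses of coding compatibility taking over the roles played there by the concrete indestructibility and $\Diamond^*$-coding facts. First I would force with $\Add(\kappa,1)$ to add a Cohen subset $g\of\kappa$, and factor this forcing as $\SS*\Tdot$ by Lemma~\ref{Lemma.Kunen}, writing $V[g]=V[T][b]$ where $T$ is the weakly homogeneous $\kappa$-Suslin tree added by $\SS$ and $b$ is the generic branch through it. Next I would fix a canonical code $E\of\kappa$ for $T$ lying in $V[T]$; since $E\in V[T]$ and $T$ is recoverable from $E$, we have $V[E]=V[T]$. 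Applying coding compatibility to this particular $E$ yields a forcing $\R_E\in V[E]=V[T]$ satisfying clauses (i)--(iii), and I would let $H\of\R_E$ be $V[g]$-generic, taking $V[g][H]$ as the final model.

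The first technical step is to untangle the genericity so that $V[g][H]$ can be read in several ways. Since $\R_E\in V[T]$, the two-step iteration $T*\check{\R_E}$ is simply the product $T\times\R_E$, and $(b,H)$ is $V[T]$-generic for it because $b$ is $V[T]$-generic for $T$ and $H$ is $V[T][b]$-generic for $\R_E$. By commutativity of the product, $H$ is then $V[T]$-generic for $\R_E$ and $b$ is $V[T][H]$-generic for $T$, so that $V[g][H]=V[T][b][H]=V[T][H][b]$. In particular $H$ is $V[E]$-generic for $\R_E$, so clause (i) applies and tells us that $V[T][H]$ has the same subsets of $\kappa$ as $V[T]=V[E]$.

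With this bookkeeping in place, the two conclusions fall out from the three clauses together with weak homogeneity. Clause (iii) gives immediately that $\phi(\kappa)$ holds in $V[g][H]$, since $H$ is $V[g]$-generic for $\R_E$. For the $\HOD$ computation: clause (ii) makes $E$ ordinal definable in $V[g][H]$, and since $E$ canonically codes $T$, this places $T\in\HOD^{V[g][H]}$. Meanwhile $T$ is weakly homogeneous in $V[T]$ and remains so in $V[T][H]$, its automorphisms persisting, so Lemma~\ref{Lemma.WeaklyHomogenousControlsHOD} applied to the branch forcing gives $\HOD^{V[g][H]}=\HOD^{V[T][H][b]}\of V[T][H]$. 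Because clause (i) ensures $V[T][H]$ adds no subsets of $\kappa$ over $V[T]$, the tree $T$ remains $\kappa$-Suslin---in particular a $\kappa$-tree with no cofinal branch---in $V[T][H]$, and hence has no cofinal branch in the smaller model $\HOD^{V[g][H]}$. Thus $T$ witnesses the failure of the tree property at $\kappa$ in $\HOD^{V[g][H]}$, so $\kappa$ is not weakly compact there, as desired.

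I expect the main obstacle to be precisely the genericity bookkeeping of the second paragraph: verifying that the single generic $H$ can be regarded simultaneously as $V[g]$-generic, so that clauses (ii) and (iii) apply, and as $V[E]=V[T]$-generic, so that clause (i) applies, all while $b$ remains generic over $V[T][H]$. Everything else is a direct transcription of the three clauses into the template, but it is this commutativity that licenses invoking each clause in the model over which it was stated.
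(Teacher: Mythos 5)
Your proposal is correct and follows essentially the same route as the paper's proof: the same Kunen factorization $V[g]=V[T][b]$, the same use of the three clauses of coding compatibility, and the same appeal to weak homogeneity of $T$ to get $\HOD^{V[g][H]}\of V[T][H]$, where $T$ remains branchless. The genericity bookkeeping you flag (reading $H$ as simultaneously $V[g]$-generic and $V[T]$-generic via product commutativity) is exactly the point the paper handles, and your treatment of it is fine.
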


\begin{proof}
Force with $\Add(\kappa,1)$ to add a $V$-generic Cohen subset $g\of\kappa$. By Lemma~\ref{Lemma.Kunen}, we may decompose $\Add(\kappa,1)$ as $\SS*\Tdot$ and view $V[g]=V[T][b]$, where $T$ is a weakly homogeneous $\kappa$-Suslin tree in $V[T]$ and $b\of T$ is a branch through it. In $V[T]$, let $E\of\kappa$ code $T$ in some absolute canonical manner. Since $\phi(\kappa)$ is coding compatible, there is a forcing notion $\R_E\in V[T]$, not adding subsets to $\kappa$ over $V[T]$, such that if $H\of\R_{E}$ is $V[g]$-generic, then $\phi(\kappa)$ holds in $V[g][H]$ and $E$ is ordinal definable there. So we have $T\in\HOD^{V[g][H]}$, since we made $E$ ordinal definable. Since $V[g][H]=V[T][H][b]$ and $T$ is weakly homogeneous in $V[T][H]$, it follows that $\HOD^{V[g][H]}\of V[T][H]$, and since $H$ does not add subsets to $\kappa$ over $V[T]$, it follows that $T$ still has no cofinal branches in $V[T][H]$, and hence none in $\HOD^{V[g][H]}$. Thus, $\kappa$ is not weakly compact in $\HOD^{V[g][H]}$.
\end{proof}

One could relax the template of Theorem~\ref{Theorem.GeneralVersionIndividualCardinal} to allow the additional forcing $\R_E$ after $\Add(\kappa,1)$ to add subsets to $\kappa$, provided that it was weakly homogeneous in $V[E]$. The point would be in this case that if $H\of\R_{E}$ is $V[g]$-generic, then we view $V[g][H]$ as $V[T][H][b]$, and the last two steps of forcing $H\times b\of\R_E\times T$ is weakly homogeneous in $V[T]$, which means $\HOD^{V[g][H]}\of V[T]$, where $T$ has no cofinal branches.

The utility of Theorem~\ref{Theorem.GeneralVersionIndividualCardinal} is revealed in Observation~\ref{Observation.LocalGlobal} and Corollary~\ref{Corollary.LCNotLargeInHOD}. For this purpose, define that a property $\phi(\kappa)$ is {\df locally verifiable}, if $\phi(\kappa)\Iff\exists\theta\, H_\theta\satisfies\psi(\kappa)$, for some assertion $\psi$. We say $\phi$ is {\df local}, if both $\phi$ and its negation $\neg\phi$ are locally verifiable. It is an excellent exercise to check that the locally verifiable properties are precisely the $\Sigma_2$-definable properties in set theory (see \cite{Hamkins2014:LocalPropertiesInSetTheory}), and consequently the local properties are the $\Delta_2$ properties, with large cardinal examples including measurability, superstrongness, almost hugeness and many others listed in Corollary~\ref{Corollary.LCNotLargeInHOD}.

\begin{observation}\label{Observation.LocalGlobal}\
 \begin{enumerate}
  \item Any locally verifiable property that holds in a forcing extension $V[g][h]$, where $g\of\kappa$ is a $V$-generic Cohen subset of $\kappa$ and $h$ is additional (possibly trivial) forcing not adding subsets to $\kappa$, is coding compatible.
  \item Any property that is indestructible by $\ltkappa$-directed closed forcing is coding compatible.
 \end{enumerate}
\end{observation}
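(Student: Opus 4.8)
The plan is to handle the two items by different mechanisms, though in both the forcing $\R_E$ is built around the coding forcing $\C_E$ that codes $E$ into the $\GCH$ or $\Diamond^*_\lambda$ pattern at a block of regular cardinals above $\kappa$, exactly as in the preceding theorems.

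Item (2) should be routine. Assume $\phi(\kappa)$ holds in $V$ and is preserved by every $\ltkappa$-directed closed forcing, and fix $E\of\kappa$ in $V[g]$. I would simply take $\R_E\in V[E]$ to be $\C_E$. Since $\C_E$ is an Easton product of highly closed posets at cardinals above $\kappa$, it is $\leqkappa$-closed (hence adds no subsets to $\kappa$ over $V[E]$, giving (i)) and in fact $\ltkappa$-directed closed; and its pattern is ordinal definable and decodes $E$, giving (ii). For (iii) the key observation is that the two-step composite $\Add(\kappa,1)*\dot\C_E$ is $\ltkappa$-directed closed as a forcing over $V$ — the first factor is, and $\C_E$ is forced to be $\ltkappa$-directed closed irrespective of which $E$ is being coded — so the indestructibility hypothesis carries $\phi(\kappa)$ from $V$ all the way up to $V[g][H]$.

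Item (1) is the substantive one, and here local verifiability does the real work. Write the local verification as $\phi(\kappa)\Iff\exists\theta\,H_\theta\satisfies\psi(\kappa)$ and fix a witness $H_\theta\satisfies\psi(\kappa)$ in the given extension $V[g][h]$, where $h$ adds no subsets to $\kappa$. The guiding idea is that a locally verifiable (i.e.\ $\Sigma_2$) property is certified by a \emph{bounded} structure $H_\theta$, and so it cannot be disturbed by forcing that operates only above rank $\theta$. Accordingly I would build $\R_E\in V[E]$ as a two-stage forcing: first a stage reproducing $h$, so as to reinstate $\phi(\kappa)$ together with its witness $H_\theta$, and then the coding forcing $\C_E$ coding $E$ at regular cardinals chosen to lie \emph{above} $\theta$. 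Because the coding acts only above $\theta$ it leaves $H_\theta$ intact, so the witness, and hence $\phi(\kappa)$, survives into $V[g][H]$, giving (iii); because each stage adds no subsets to $\kappa$ — the $h$-stage by hypothesis, the coding stage by its closure, using Lemma~\ref{Lemma.ClosureDistributive} to pass from $V[E]$ to $V[g]$ — we obtain (i); and because the coding pattern is ordinal definable and its location is definable from the least $\theta$ verifying $\phi(\kappa)$, the set $E$ becomes ordinal definable, giving (ii).

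The delicate point, and the step I expect to be the main obstacle, is that the $\phi$-reinstating forcing $h$ lives in $V[g]$ whereas $\R_E$ is required to lie in $V[E]$, and moreover the coding location depends on the witness level $\theta$, which is only read off after the $h$-stage has been carried out. I would try to resolve this by using the factorization of $\Add(\kappa,1)$ from Lemma~\ref{Lemma.Kunen} together with the homogeneity of the Cohen forcing to present $V[g]$ over $V[E]$ in a controlled way, so that the $h$-stage and the subsequent high coding can be realized as a single $V[E]$-definable iteration, and by taking the coding block to be a definable sequence of regular cardinals above the least $\theta$ that verifies $\phi(\kappa)$ in the final model (that least $\theta$ being ordinal definable there). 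Checking that the two stages genuinely commute with the residual Cohen genericity — so that $\phi(\kappa)$ is first established and then protected by the high coding, while still no new subsets of $\kappa$ appear over $V[E]$ — is the part of the argument that will require the most care.
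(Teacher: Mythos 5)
Your proposal matches the paper's proof in both items: for (1) the paper likewise takes $\R_E$ to be the forcing for $h$ followed by coding $E$ into the \GCH\ or $\Diamond^*$ pattern above the witness level $\theta$, observing that this preserves $H_\theta$ and hence $\phi(\kappa)$, and for (2) it simply notes that adding $g$ and then coding $E$ above $\kappa$ is itself $\ltkappa$-directed closed forcing. The ``delicate point'' you isolate---arranging $\R_E\in V[E]$ and fixing the coding location---is not addressed in the paper's two-sentence proof either, and is harmless in the intended applications, where the auxiliary forcing is definable from ground-model data, is $\leqkappa$-distributive over every intermediate model, and $\theta$ is fixed before $E$ is chosen.
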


\begin{proof} If a locally verifiable property $\phi(\kappa)$ holds in $V[g][h]$, where $g\of\kappa$ is a Cohen subset of $\kappa$ and $h\of\Q$ is further forcing not adding subsets to $\kappa$, then $\phi(\kappa)$ is witnessed in some $H_\theta^{V[g][h]}$. For any $E\of\kappa$ in $V[g]$, therefore, we may simply force over $V[g][h]$ to code $E$ into the \GCH\ or $\Diamond^*$ pattern above $\theta$, which will make $E$ definable while preserving $H_\theta$ and hence $\phi(\kappa)$, thereby showing that $\phi(\kappa)$ was coding compatible in $V$. And if $\phi(\kappa)$ is indestructible by 
$\ltkappa$-directed closed forcing, then it will be preserved by the forcing to add $g\of\kappa$ and then code $E$ above $\kappa$.
\end{proof}

One doesn't need full indestructibility, of course, but rather only that the property is preserved after adding the Cohen set and the coding forcing (plus, if necessary, the additional forcing not adding subsets to $\kappa$). For example, any measurable cardinal $\kappa$ can be made indestructible by $\Add(\kappa,1)$ followed by any further $\leqkappa$-distributive forcing. This perspective unifies the local case of Observation~\ref{Observation.LocalGlobal} with the indestructibility case.

\begin{corollary}\label{Corollary.LCNotLargeInHOD}
Suppose that $\kappa$ has any of the following large cardinal properties
\begin{quote}
\begin{itemize}
 \item[Local:] weakly compact, indescribable, totally indescribable, Ramsey, strongly Ramsey, measurable, $\theta$-tall, $\theta$-strong, Woodin, $\theta$-supercompact, 
superstrong, $n$-superstrong, $\omega$-superstrong, $\lambda$-extendible, almost huge, huge,
$n$-huge, rank-into-rank ($I_0, I_1$ and $I_3$);
 \item[Global:] unfoldable, strongly unfoldable, tall, strong, supercompact, superhuge, and many others.
\end{itemize}
\end{quote}
Then there is a forcing extension in which $\kappa$ continues to have the property, but is not weakly compact in \HOD.
\end{corollary}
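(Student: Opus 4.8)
The plan is to deduce the corollary uniformly from Theorem~\ref{Theorem.GeneralVersionIndividualCardinal}: for each property $\phi(\kappa)$ on the two lists I would verify that $\phi$ is coding compatible, and then Theorem~\ref{Theorem.GeneralVersionIndividualCardinal} immediately supplies a forcing extension $V[g][H]$ in which $\phi(\kappa)$ holds while $\kappa$ is not weakly compact in $\HOD^{V[g][H]}$. By Observation~\ref{Observation.LocalGlobal} there are two standard routes to coding compatibility, and I would organize the argument along precisely the local/global split already indicated in the statement.

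For the global column — unfoldable, strongly unfoldable, tall, strong, supercompact, superhuge — I would use Observation~\ref{Observation.LocalGlobal}(2), so it suffices to arrange by a preparatory iteration that the property is indestructible by $\ltkappa$-directed closed forcing. This is Laver's theorem in the supercompact case, and analogous Laver-style or lottery preparations (as in \cite{Hamkins2000:LotteryPreparation}) are available for strong, tall, and the remaining notions. Given such indestructibility, Observation~\ref{Observation.LocalGlobal}(2) yields coding compatibility directly, since the coding forcing of Theorem~\ref{Theorem.GeneralVersionIndividualCardinal} is itself $\ltkappa$-directed closed.

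For the local column I would use Observation~\ref{Observation.LocalGlobal}(1). Each property listed there is $\Delta_2$ and hence locally verifiable, so by the observation it is enough to produce an extension $V[g][h]$, with $g\of\kappa$ a Cohen subset and $h$ adding no subsets to $\kappa$, in which $\phi(\kappa)$ still holds. I would obtain this exactly as in the measurable template preceding the corollary: a preparatory forcing renders $\phi(\kappa)$ indestructible by $\Add(\kappa,1)$ followed by $\leqkappa$-distributive (or $\leqkappa$-closed) forcing, so that $\phi(\kappa)$ survives into $V[g]$ (taking $h$ trivial) or into $V[g][h]$ after a distributive resurrecting step. The lottery preparation supplies such a preparation for each of these notions, and Observation~\ref{Observation.LocalGlobal}(1) then delivers coding compatibility.

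The main obstacle is not a single logical step but the case-by-case confirmation that every listed cardinal admits the required preparation — indestructibility by $\ltkappa$-directed closed forcing in the global case, and by $\Add(\kappa,1)$-plus-distributive forcing in the local case. For the classical notions (measurable, supercompact, strong, tall, weakly compact) these preparations are standard, and the genuinely delicate cases are the strong local notions such as almost huge, huge, $n$-huge, and the rank-into-rank axioms $I_0$, $I_1$, $I_3$, where one must check that the witnessing embeddings survive or can be resurrected after the Cohen forcing. Here I would exploit the $\Delta_2$ character of the property: the witness lives in some $H_\theta$, the relevant embedding may be taken with a correspondingly small target, and a master-condition lifting through $\Add(\kappa,1)$ — using the $\GCH$ at $\kappa$ as in the measurable proof — together with the fact that the coding is carried out well above $\theta$ and hence preserves $H_\theta$, secures $\phi(\kappa)$ in the final model.
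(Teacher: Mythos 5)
Your proposal follows the paper's proof essentially verbatim: both reduce the corollary to Theorem~\ref{Theorem.GeneralVersionIndividualCardinal} by verifying coding compatibility through the two clauses of Observation~\ref{Observation.LocalGlobal}, splitting the list into local and global properties and deferring to the standard preservation/indestructibility literature for the case-by-case checks. The one divergence is your treatment of the delicate local cases (superstrong, almost huge, rank-into-rank): the paper obtains these by preservation through the canonical Easton-support \GCH\ iteration, with the rest of the iteration \emph{above} $\kappa$ serving as the additional forcing $h$ of Observation~\ref{Observation.LocalGlobal}(1), rather than by a master-condition lift through $\Add(\kappa,1)$ alone---which is the right instinct to be cautious about, since for a superstrong-type embedding $j$ one needs an $M$-generic for $\Add(j(\kappa),1)$ that cannot in general be produced by diagonalization below a master condition, and such cardinals are never fully indestructible by $\ltkappa$-directed closed forcing.
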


\begin{proof} The point is that each of these large cardinal properties can be made coding compatible. In the case of the local properties, standard arguments in the literature show how these large cardinals are preserved by suitable Easton-support iterations of Cohen forcing, forcing to add a Cohen subset, for example, at every regular cardinal stage; this is essentially the canonical forcing of the \GCH. (For the superstrong, extendible and rank-into-rank cases, the reader may find it helpful to consult \cite{DimonteFriedman2014:RankIntoRankHypothesesAndTheFailureOfGCH, Friedman2006:LargeCardinalsAndLLikeUniverses, Tsaprounis2013:OnExtendibleCardinalsAndGCH, Hamkins2001:WholenessAxiom}.) If $\kappa$ has any of these local properties, and $V[G]$ is the forcing extension up to stage $\kappa$, then after forcing to add $g\of\kappa$ at stage $\kappa$ and possibly additional forcing $g^*\of\Q^*$---in the case of superstrong, almost huge and others one must force with the rest of the iteration above $\kappa$---then $\kappa$ retains the large cardinal property in $V[G][g][g^*]$, and so by Observation~\ref{Observation.LocalGlobal}, the property was made coding compatible in $V[G]$. Consequently, these instances of the corollary follow from Theorem~\ref{Theorem.GeneralVersionIndividualCardinal}.

For the global properties, as mentioned before the statement of the corollary, one needs to have the large cardinal property after forcing with $\Add(\kappa,1)$ and the coding forcing for a specific $E\of\kappa$ (plus, if this helps, additional forcing not adding subsets to $\kappa$). Results in the literature establish the required degree of indestructibility(see variously \cite{Laver78, GitikShelah89, Hamkins2000:LotteryPreparation, HamkinsJohnstone2010:IndestructibleStrongUnfoldability, Hamkins2009:TallCardinals,GitmanJohnstone:IndestructibilityForRamsey, Cody2013:EastonsTheoremInThePresenceOfWoodinCardinals, Hamkins:ForcingAndLargeCardinals}).
\end{proof}

Although our method is flexible, it does not apply to all large cardinals, and there are several cases left open. For example, can there be an extendible cardinal, which is not extendible in \HOD? (Or not measurable in \HOD? Not weakly compact in \HOD?) The method also does not apply to those large cardinals, such as the strongly compact cardinals, which lack a robust forcing-preservation theory (although of course the case of supercompactness in Corollary~\ref{Corollary.LCNotLargeInHOD} provides a strongly compact cardinal that is not weakly compact in \HOD). See Section~\ref{Section.Limitations} for these and other open questions.

\section{Proper class of large cardinals, not large in \HOD}\label{Section.ProperClassLC}

We should like now to extend the phenomenon to have a proper class of various kinds of large cardinals, which are not large in $\HOD$, and furthermore such that there are no such large cardinals in $\HOD$. Let us begin with the case of measurable cardinals.

\begin{theorem}\label{Theorem.ProperClassMeasurablesNotLargeInHOD}\
There is a class forcing notion $\P$ forcing that
 \begin{enumerate}
  \item All measurable cardinals of the ground model are preserved and no new measurable cardinals are created.
  \item There are no measurable cardinals in the $\HOD$ of the extension.
  \item The measurable cardinals of the extension are not weakly compact in the $\HOD$ of the extension.
 \end{enumerate}
One may also ensure that the \GCH\ holds in the extension and its $\HOD$.
\end{theorem}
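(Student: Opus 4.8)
The plan is to globalize the measurable version of Theorem~\ref{Theorem.SupercompactNotWCinHOD} by carrying out its construction simultaneously at every inaccessible cardinal, via an Easton-support class iteration $\P=\<\P_\gamma,\Qdot_\gamma>$ acting nontrivially exactly at the inaccessible $\gamma$. Assuming the \GCH\ (forcing it first if necessary), at each inaccessible $\gamma$ I would force with $\Add(\gamma,1)$, factored by Lemma~\ref{Lemma.Kunen} as $\SS_\gamma*\Tdot_\gamma$ so as to add a weakly homogeneous $\gamma$-Suslin tree $T_\gamma$ and then a branch $b_\gamma$ through it, followed by $\ltgamma$-closed coding forcing $\R_\gamma$ that writes a code for $T_\gamma$---and, more generally, a code for all the sets of the ground model together with the codes added at earlier stages---into the \GCH\ or $\Diamond^*_\lambda$ pattern on a block of regular cardinals below the next inaccessible. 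Since the closed coding never adds a cofinal branch to $T_\gamma$, the tree stays $\gamma$-Suslin, so, exactly as in Theorem~\ref{Theorem.SupercompactNotWCinHOD}, the branch $b_\gamma$ remains generic over the coding and the two steps commute. This lets me reorganize the extension as $V[G]=W[\vec b]$, where $W=V[\vec T][\text{coding}]$ collects the trees and the coding but none of the branches, and the branch part $\vec b$ is the product of weakly homogeneous tree forcings over $W$.

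The heart of the argument is to identify $\HOD^{V[G]}$ with $W$. Since $\vec b$ is weakly homogeneous over $W$, Lemma~\ref{Lemma.WeaklyHomogenousControlsHOD} gives $\HOD^{V[G]}\of W$. Conversely, the coding is arranged so that every set of $V$, every tree $T_\gamma$, and the coding generic itself is ordinal definable in $V[G]$; hence $V\of\HOD^{V[G]}$ and $\vec T\in\HOD^{V[G]}$, and as these generate $W$ we obtain $W\of\HOD^{V[G]}$, so $\HOD^{V[G]}=W$. I would design the coding to force the \GCH\ in $W$ as well, yielding the \GCH\ in the extension and in its \HOD.

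Granting this, the three clauses follow. For (3): the measurable cardinals of $V[G]$ are the $V$-measurables, hence coding points, so $T_\kappa\in\HOD^{V[G]}=W$ is $\kappa$-Suslin in $W$ with no cofinal branch there; thus the tree property, and so weak compactness, fails for $\kappa$ in \HOD. For (2): the iteration preserves cardinals and cofinalities, so $V$ and $V[G]$ agree on these; since $V\of\HOD=W\of V[G]$, all three models share the same cardinals and cofinalities and satisfy the \GCH, whence a cardinal is inaccessible in \HOD\ if and only if it is inaccessible in $V$. Were $\lambda$ measurable in \HOD, it would be inaccessible in \HOD, hence a coding point, and the $\lambda$-Suslin tree $T_\lambda\in W$ would refute its weak compactness in \HOD---a contradiction, so \HOD\ has no measurable cardinals at all. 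For (1): the $V$-measurables are preserved into $V[G]$ by the master-condition lifting of Theorem~\ref{Theorem.MeasurableNotMeasurableInHOD} applied stagewise (the tail above $\kappa$ is sufficiently closed and the \GCH\ supplies the needed generics, after a class preparation making each measurable indestructible by $\Add(\kappa,1)$ followed by $\leqkappa$-distributive forcing); and no new measurables are created, since the tail above any $\lambda$ adds no subsets of $\lambda$ while the forcing at and below $\lambda$ is of the mild Cohen-and-coding type known not to create measurable cardinals.

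The step I expect to be the main obstacle is the exact computation $\HOD^{V[G]}=W$ performed uniformly across the whole class iteration: one must check that the weak homogeneity of the branch forcing genuinely controls \HOD\ at the class level---so that no branch $b_\gamma$ leaks in---while the class-length coding simultaneously makes $V$ and every $T_\gamma$ definable, and that the coding and branch steps commute at every stage without disrupting the Easton-support bookkeeping, cardinal preservation, or the \GCH. Tightly bound up with this is keeping the sandwich $V\of\HOD\of V[G]$ close enough in cardinal and cofinality structure to drive any hypothetical \HOD-measurable down to a $V$-inaccessible coding point; this is precisely what promotes the local failure of weak compactness in \HOD\ into the global assertion that \HOD\ has no measurable cardinals.
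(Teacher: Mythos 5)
Your outline captures the right ingredients (Kunen's factorization $\Add(\gamma,1)\cong\SS_\gamma*\Tdot_\gamma$, coding the trees, homogeneity of the branch part to pin down \HOD, the sandwich $V\of\HOD\of V[G]$), but the step you yourself flag as ``the main obstacle'' is a genuine gap, and it is created by your choice of an Easton-support \emph{iteration}. In an iteration, the poset used at a later stage $\gamma'$ is computed in $V[G_{\gamma'}]$, a model that already contains the earlier branches $b_\gamma$; for instance $\Add(\gamma',1)^{V[G_{\gamma'}]}$ has conditions that literally code $b_\gamma$ as bounded subsets of $\gamma'$, and it is not the same poset (nor densely the same) as $\Add(\gamma',1)$ computed in a model omitting those branches. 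So there is no naturally defined $W=V[\vec T][\text{coding}]$ with $V[G]=W[\vec b]$ and $\vec b$ a single weakly homogeneous forcing over $W$: the branches are entangled with the definitions of all subsequent stages, and Lemma~\ref{Lemma.WeaklyHomogenousControlsHOD} has nothing to grab onto. The paper's resolution is structural: first pass to a prepared model $\Vbar=V[F]$ (Lemma~\ref{Lemma.MeasurableCCAcoding}) in which every set of ordinals is already coded into the \GCH\ or $\Diamond^*$ pattern at the triple successors of $\beth$-fixed points and every measurable is indestructible by $\Add(\kappa,1)$, and then force with the Easton-support \emph{product} $\Pi_\kappa(\Add(\kappa,1)*\Rdot(\kappa))$ over the measurable cardinals, every factor computed in $\Vbar$. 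Only then does the rearrangement $\P\iso\Pi_\kappa\SS_\kappa*\Pi_\kappa(\Rdot(\kappa)\dot\times\Tdot_\kappa)$ go through, with the second factor weakly homogeneous over $\Vbar[\vec T]$, yielding $\HOD^{\Vbar[G]}=\Vbar[\vec T]$.

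Two further divergences are worth noting. First, you force at every inaccessible and deduce clause (2) from the agreement of cardinals, cofinalities and \GCH\ between $V$, \HOD\ and $V[G]$ (any \HOD-measurable is \HOD-inaccessible, hence $V$-inaccessible, hence a coding point killed by its Suslin tree). That deduction is sound as far as it goes, but forcing at all inaccessibles breaks the paper's preservation argument: there one chooses a normal ultrapower $j:V_1\to M$ with $\kappa$ \emph{not} measurable in $M$, so that $j(\P_\kappa)$ has no forcing at coordinate $\kappa$ and the tail $\P_{\kappa,j(\kappa)}$ is $\leqkappa$-closed and can be hit by a Kunen--Paris diagonalization; with forcing at all $M$-inaccessibles you would instead need a master condition at coordinate $\kappa$ inside $j(\P_\kappa)$, which you have not supplied. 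The paper forces only at the measurables and gets clause (2) instead from the closure-point theorem of \cite{Hamkins2003:ExtensionsWithApproximationAndCoverProperties}: a measurable of $\Vbar[\vec T]$ must already be measurable in $\Vbar$, hence a coordinate of the product, hence carries a Suslin tree there (using that the part of the tree forcing below $\kappa$ is productively $\kappa$-c.c.\ and so adds no $\kappa$-branch). Second, your plan to code ``all the sets of the ground model'' locally below the next inaccessible at each stage cannot work for a proper class of sets; the paper separates this global coding into the preliminary forcing $F$ at a fixed definable class of coding points, leaving the main product to code only the trees.
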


\begin{proof}
In the interesting case, there are many measurable cardinals in the ground model $V$. Let $\Vbar=V[F]$ be the extension arising from the forcing mentioned in the proof of Lemma~\ref{Lemma.MeasurableCCAcoding}, so that the \GCH\ holds at every inaccessible cardinal in $\Vbar$ and every set of ordinals of $\Vbar$ is coded into the \GCH\ or $\Diamond^*$ patterns at, say, the triple successors $\delta^{+++}$ of the $\beth$-fixed points $\delta=\beth_\delta$, and furthermore that every measurable cardinal $\kappa$ is indestructible by forcing over $\Vbar$ with $\Add(\kappa,1)$. Now, in $\Vbar$, for each measurable cardinal $\kappa$, consider the forcing $\Add(\kappa,1)*\Rdot(\kappa)$, where as previously we factor $\Add(\kappa,1)$ into two steps $\SS_\kappa*\Tdot_\kappa$, which first adds a homogeneous $\kappa$-Suslin tree $T_\kappa$ and then forces with it, and then $\Rdot(\kappa)$ is the forcing in $\overline{V}[T_\kappa]$ that codes the tree $T_\kappa$ into the \GCH\ or $\Diamond^*$ patterns at the next $\kappa$ many regular cardinals above $\kappa$, starting above $\kappa^{+++}$. Thus, $\Add(\kappa,1)*\Rdot(\kappa)\iso\SS_\kappa*(\Rdot(\kappa)\dot\times\Tdot_\kappa)$. Let $\P=\Pi_\kappa(\Add(\kappa,1)*\Rdot(\kappa))$ be the Easton-support product of these forcing notions, taken over all measurable cardinals $\kappa$, and let $G\of\P$ be $\Vbar$-generic. Our final model is $\Vbar[G]$, which we shall now argue is as desired.

First, we claim that every measurable cardinal $\kappa$ is preserved to $\Vbar[G]$. Since the forcing above $\kappa$ is $\leqkappa$-closed, and the forcing $\R(\kappa)$ is $\leqkappa$-distributive after adding the Cohen subset to $\kappa$, it suffices to argue that $\kappa$ is measurable in the extension $\Vbar[G_\kappa][g_\kappa]$, where $G_\kappa$ performs the forcing at measurable cardinals below $\kappa$ and $g_\kappa\of\kappa$ is the Cohen subset of $\kappa$ added by $\Add(\kappa,1)$ on coordinate $\kappa$. By our indestructibility assumption on $\kappa$ in $\Vbar$, we know that $\kappa$ remains measurable in $V_1=\Vbar[g_\kappa]$, and so it suffices to argue merely that the forcing $\P_\kappa$ preserves the measurability of $\kappa$, forcing over $V_1$. And this can be done in the style of the Kunen-Paris theorem. Namely, fix in $V_1$ any normal ultrapower embedding $j:V_1\to M$ for which $\kappa$ is not measurable in $M$, and consider $j(\P_\kappa)=\P_\kappa\times\P_{\kappa,j(\kappa)}$, where $\P_{\kappa,j(\kappa)}$ is the rest of the product forcing from stage $\kappa$ up to $j(\kappa)$ in $M$. Since $\kappa$ is not measurable in $M$, there is no forcing at coordinate $\kappa$, and so $\P_{\kappa,j(\kappa)}$ is $\leqkappa$-closed in $M$. Since $M^\kappa\of M$ in $V_1$ and $\P_{\kappa,j(\kappa)}$ has $|j(2^\kappa)|^{V_1}=\kappa^+$ many dense subsets in $M$, we may construct by diagonalization in $V_1$ an $M$-generic filter $G_{\kappa,j(\kappa)}\of\P_{\kappa,j(\kappa)}$. It follows that $G_\kappa\times G_{\kappa,j(\kappa)}\of j(\P_\kappa)$ is $M$-generic, and so we may lift the embedding to $j:V_1[G_\kappa]\to M[j(G_\kappa)]$, where $j(G_\kappa)=G_\kappa\times G_{\kappa,j(\kappa)}$, thereby witnessing that $\kappa$ is measurable in $V_1[G_\kappa]$ and hence also in $\Vbar[G]$, as desired.

Next, since $\Vbar=V[F]$, where we had forced over the original ground model $V$ with $F\of\bar\P$ of Lemma~\ref{Lemma.MeasurableCCAcoding}, then the combined forcing $\bar\P*\P$ admits a closure point in the sense of~\cite{Hamkins2003:ExtensionsWithApproximationAndCoverProperties}, and so by the main theorem of \cite{Hamkins2003:ExtensionsWithApproximationAndCoverProperties} therefore creates no new measurable cardinals. In particular, the three models $V\of \Vbar\of \Vbar[G]$ have the same measurable cardinals. The same reasoning shows that the large cardinals in $\Vbar[\vec T]$, where $\vec T$ is the sequence of Suslin trees $T_\kappa$ added by $\SS_\kappa$ at coordinate $\kappa$, are also large in $\Vbar$.

We claim that $\HOD^{\Vbar[G]}$ has no measurable cardinals. To see this, we argue that $\HOD^{\Vbar[G]}=\Vbar[\vec T]$. The forward inclusion is a consequence of the fact that $\Pi_\kappa(\R(\kappa)\times T_\kappa)$ is weakly homogeneous in $\Vbar[\vec T]$, since the coding forcing is weakly homogeneous and the trees themselves are weakly homogeneous, and so $\HOD^{\Vbar[G]}\of \Vbar[\vec T]$ by Lemma~\ref{Lemma.WeaklyHomogenousControlsHOD}. Conversely, note that the coding performed by $F$ at the triple successors of the $\beth$-fixed points is preserved to $\Vbar[G]$, since the forcing to add $G$ does not interfere with that coding and furthermore all $\beth$-fixed points are preserved from $\Vbar$ to $\Vbar[G]$. It follows that every set of ordinals in $\Vbar$ is coded into the \GCH\ or $\Diamond^*$ pattern on such cardinals in $\Vbar[G]$, and this implies $\Vbar\of\HOD^{\Vbar[G]}$. Further, the trees $T_\kappa$ themselves are coded into the \GCH\ or $\Diamond^*$ pattern on the next $\kappa$-many regular cardinals of $\Vbar[G]$, and so $\vec T$ is definable in $\Vbar[G]$. Thus, $\Vbar[\vec T]\of \HOD^{\Vbar[G]}$, and we conclude $\HOD^{\Vbar[G]}=\Vbar[\vec T]$.

So we shall show that there are no measurable cardinals in $\Vbar[\vec T]$. Suppose that $\kappa$ is a measurable cardinal in $\Vbar[\vec T]$. By our remark two paragraphs above, it follows that $\kappa$ was measurable in $\Vbar$ and therefore is one of the coordinates at which forcing is performed. In particular, at stage $\kappa$ we added the $\kappa$-Suslin tree $T_\kappa$, which is Suslin in $\Vbar[T_\kappa]$. The forcing above $\kappa$ cannot affect whether $T_\kappa$ is $\kappa$-Suslin, since it adds no new subsets to $\kappa$. The forcing $\Pi_{\delta<\kappa}\SS_\delta$ that adds the trees $T_\delta$ at measurable cardinals $\delta<\kappa$ is productively $\kappa$-c.c.~(meaning it remains $\kappa$-c.c.~in the forcing extension), and such forcing cannot add a $\kappa$-branch through a $\kappa$-Suslin tree. Thus, the tree property fails for $\kappa$ in $\Vbar[\vec T]$ and in particular, $\kappa$ is not weakly compact there. So there are no measurable cardinals in $\Vbar[\vec T]$, and consequently no measurable cardinals in $\HOD^{\Vbar[G]}$, establishing statement~(2). Furthermore, the measurable cardinals of $\Vbar[G]$ are the same as the measurable cardinals of $\Vbar$, which are not weakly compact in $V[\vec T]=\HOD^{\Vbar[G]}$, establishing statement~(3).

Finally, to achieve the \GCH\ in the extension $\Vbar[G]$ and its \HOD, one should start in a model of \GCH\ and use exclusively the $\Diamond^*$ coding, rather than the \GCH\ coding.
\end{proof}

\begin{theorem}\label{Theorem.ProperClassSCNotLargeInHOD}\
There is a class forcing notion $\P$ forcing that
 \begin{enumerate}
  \item All supercompact cardinals of the ground model are preserved and no new supercompact cardinals are created.
  \item There are no supercompact cardinals in the $\HOD$ of the extension.
  \item The supercompact cardinals of the extension are not weakly compact in the $\HOD$ of the extension.
 \end{enumerate}
One may also ensure that the \GCH\ holds in the extension and its $\HOD$.
\end{theorem}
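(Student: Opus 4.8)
The plan is to run the argument of Theorem~\ref{Theorem.ProperClassMeasurablesNotLargeInHOD} essentially verbatim, replacing \emph{measurable} by \emph{supercompact} throughout and replacing the $\Add(\kappa,1)$-indestructibility used there by full Laver indestructibility by $\ltkappa$-directed closed forcing. First I would pass to a preparatory extension $\Vbar=V[F]$, the supercompact analogue of the model produced in Lemma~\ref{Lemma.MeasurableCCAcoding}, in which (a) the \GCH\ holds at every inaccessible cardinal, (b) every set of ordinals of $\Vbar$ is coded into the \GCH\ or $\Diamond^*$ pattern at the triple successors $\delta^{+++}$ of the $\beth$-fixed points $\delta=\beth_\delta$, and (c) the supercompactness of every supercompact cardinal $\kappa$ is indestructible by $\ltkappa$-directed closed forcing over $\Vbar$; this is obtained by interleaving the coding with a Laver preparation. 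Then, exactly as before, for each supercompact $\kappa$ I set $\Add(\kappa,1)*\Rdot(\kappa)\iso\SS_\kappa*(\Rdot(\kappa)\dot\times\Tdot_\kappa)$ via Lemma~\ref{Lemma.Kunen}, where $\SS_\kappa$ adds a weakly homogeneous $\kappa$-Suslin tree $T_\kappa$ and $\Rdot(\kappa)$ codes $T_\kappa$ into the \GCH\ or $\Diamond^*$ patterns on the next $\kappa$ regular cardinals above $\kappa^{+++}$, and I let $\P=\Pi_\kappa(\Add(\kappa,1)*\Rdot(\kappa))$ be the Easton-support product over all supercompact $\kappa$, with $\Vbar[G]$ the final model.

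Most of the remaining steps transfer directly from the measurable case. The computation $\HOD^{\Vbar[G]}=\Vbar[\vec T]$ goes through unchanged: weak homogeneity of $\Pi_\kappa(\R(\kappa)\times T_\kappa)$ gives $\HOD^{\Vbar[G]}\of\Vbar[\vec T]$ by Lemma~\ref{Lemma.WeaklyHomogenousControlsHOD}, while the preserved coding of all sets of $\Vbar$ and of the trees $T_\kappa$ yields the reverse inclusion. The non-creation of supercompacts is again handled by a closure-point argument: the combined forcing $\bar\P*\P$ admits a closure point, so by the main theorem of \cite{Hamkins2003:ExtensionsWithApproximationAndCoverProperties} no new supercompact cardinals appear, whence $V$, $\Vbar$ and $\Vbar[G]$ have the same supercompacts and, by the same reasoning applied to $\Pi_\kappa\SS_\kappa$, so does $\Vbar[\vec T]$. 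For statements~(2) and~(3) I would then argue as before that each former supercompact $\kappa$ has $T_\kappa$ still $\kappa$-Suslin in $\Vbar[\vec T]$---the forcing $\Pi_{\delta<\kappa}\SS_\delta$ below is productively $\kappa$-c.c.\ and so adds no cofinal branch, while the forcing above adds no subsets of $\kappa$---so $\kappa$ is not weakly compact there; and since any supercompact of $\Vbar[\vec T]=\HOD^{\Vbar[G]}$ must be a forcing coordinate and hence carry a Suslin $T_\kappa$, there are none. The \GCH\ clause is obtained by starting in a model of \GCH\ and using only $\Diamond^*$ coding.

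The one genuinely new point---and the step I expect to be the main obstacle---is the preservation of supercompactness itself. Measurability is local, so in Theorem~\ref{Theorem.ProperClassMeasurablesNotLargeInHOD} one could discard the $\leqkappa$-closed forcing above $\kappa$ and reduce to a Kunen--Paris lifting for the forcing below $\kappa$; supercompactness is global, so the tail of the product above $\kappa$ cannot be ignored. Here I would instead factor $\P\iso\P_{<\kappa}\times\P_{\geq\kappa}$ and exploit that each factor $\Add(\lambda,1)*\Rdot(\lambda)$ with $\lambda\geq\kappa$ is $\ltlambda$-directed closed, so that the Easton-support tail $\P_{\geq\kappa}$ is $\ltkappa$-directed closed; by indestructibility $\kappa$ is therefore still supercompact in $\Vbar[G_{\geq\kappa}]$. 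It then remains to preserve supercompactness through the factor $\P_{<\kappa}$, which is $\kappa$-c.c.\ of size $\kappa$ (nontrivial precisely when $\kappa$ is a limit of supercompacts). For this I would fix $\theta$ and a $\theta$-supercompactness embedding $j\colon\Vbar[G_{\geq\kappa}]\to M$ chosen so that $\kappa$ is not supercompact in $M$; then $j(\P_{<\kappa})$ factors as $\P_{<\kappa}$ followed by the product $\P_{[\kappa,j(\kappa))}$ from stage $\kappa$ up to $j(\kappa)$ in $M$, which---because $\kappa$ is not a forcing coordinate in $M$---has its first nontrivial stage well above $\kappa$ and is sufficiently closed that, using $M^\theta\of M$ together with the count $|j(2^{<\kappa})|$ of its dense sets, one may build an $M$-generic filter by diagonalization and lift $j$ to the $\P_{<\kappa}$-extension, witnessing the $\theta$-supercompactness of $\kappa$ in $\Vbar[G]$. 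The delicate points will be selecting the supercompactness measure so that $\kappa$ escapes being supercompact in $M$ while retaining $M^\theta\of M$, and verifying that the tail forcing is closed enough for the diagonalization to succeed; these are the analogues, for supercompactness, of the Kunen--Paris construction carried out for measurability in Theorem~\ref{Theorem.ProperClassMeasurablesNotLargeInHOD}.
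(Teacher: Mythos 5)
Your proposal follows essentially the same route as the paper's proof: Laver indestructibility absorbs the $\ltkappa$-directed closed tail $\P^{\geq\kappa}$, and the factor $\P_{<\kappa}$ is handled by a Kunen--Paris-style lift of a $\theta$-supercompactness embedding (trivially, by smallness, when $\kappa$ is not a limit of supercompacts), with the $\HOD$ computation and the Suslin-tree argument carried over verbatim. The delicate points you flag are resolved in the paper just as you anticipate: since the \SCH\ holds above a supercompact cardinal, one fixes a strong limit $\theta$ of cofinality above $\kappa$ with $2^\theta=\theta^+$ (so $\theta^{\ltkappa}=\theta$), whence the chosen $\theta$-supercompactness embedding $j$ satisfies $M^\theta\of M$, has $\kappa$ not $\theta$-supercompact in $M$ and hence no supercompact cardinals of $M$ in $[\kappa,\theta]$ (making the tail of $j(\P_{<\kappa})$ $\leqtheta$-closed), and presents only $|j(\kappa)|=\theta^+$ many dense sets to meet in the diagonalization.
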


\begin{proof}
We simply modify the proof of Theorem~\ref{Theorem.ProperClassMeasurablesNotLargeInHOD} for the supercompact context. In the interesting case, the ground model $V$ has many supercompact cardinals. We may assume without loss that they have all been made Laver indestructible, by forcing with the global Laver preparation (or if the \GCH\ is desired, we may assume that they are all indestructible by \GCH-preserving directed closed forcing, as in \cite{Hamkins98:AsYouLikeIt}). When there is a proper class of supercompact cardinals, this implies outright by Observation~\ref{Observation.IndestructibleImpliesV_kappaSubsetHOD} that every set of ordinals in $V$ is coded explicitly into the \GCH\ pattern (and the same idea applies also to the $\Diamond^*$ pattern when \GCH\ is desired), at whichever type of coding cardinals we might prefer. If the supercompact cardinals are bounded, then we may simply perform additional coding forcing above the bound to ensure that every set is coded in this way as desired. Let $\P$ be the Easton-support product $\Pi_\kappa(\Add(\kappa,1)*\Rdot(\kappa))$ as in Theorem~\ref{Theorem.ProperClassMeasurablesNotLargeInHOD}, but now with forcing only at coordinates $\kappa$ that are fully supercompact. The indestructibility of $\kappa$ in $V$ ensures that the forcing $\P^\kappa$ at coordinates $\kappa$ and above preserves the supercompactness of $\kappa$ to $V[G^\kappa]$, and so we must prove only that $\P_\kappa$ preserves the supercompactness of $\kappa$. If $\kappa$ is not a limit of supercompact cardinals, then this product is small relative to $\kappa$ and therefore preserves the supercompactness of $\kappa$. For the remaining case, assume that $\kappa$ is a supercompact limit of supercompact cardinals. Since the \SCH\ holds above any supercompact cardinal, we may fix a large strong limit cardinal $\theta$ with $2^\theta=\theta^+$ and with cofinality above $\kappa$, so that $\theta^{\ltkappa}=\theta$, and let $j:V[G^\kappa]\to M$ be a $\theta$-supercompactness embedding in $V[G^\kappa]$, for which $\kappa$ is not $\theta$-supercompact in $M$. It follows that there are no supercompact cardinals in $M$ in the interval $[\kappa,\theta]$, and so $j(\P_\kappa)$ has no forcing at coordinates in the interval $[\kappa,\theta]$. So $j(\P_\kappa)$ factors as $\P_\kappa\times\P_{\kappa,j(\kappa)}$, but the second factor is $\leqtheta$-closed in $M$ and has size $j(\kappa)$. Since $M^\theta\of M$ in $V[G^\kappa]$ and furthermore $|j(\kappa)|^{V[G^\kappa]}=\theta^+$, it follows that in $V[G^\kappa]$ we may construct an $M$-generic filter $G^*\of\P_{\kappa,j(\kappa)}$, and so when this filter is combined with $G_\kappa\of\P_\kappa$, we may lift the embedding to $j:V[G^\kappa][G_\kappa]\to M[j(G_\kappa)]$, where $j(G_\kappa)=G_\kappa\times G^*$. This lifted embedding witnesses that $\kappa$ remains $\theta$-supercompact in $V[G^\kappa][G_\kappa]=V[G]$, as desired. Meanwhile, we may argue as in the proof of Theorem~\ref{Theorem.ProperClassMeasurablesNotLargeInHOD} that $\HOD^{V[G]}=V[\vec T]$, which is a model in which no supercompact cardinal $\kappa$ of $V$ and hence of $V[G]$ is weakly compact. And further, no new supercompact cardinals are created in $V[\vec T]$, and so $\HOD^{V[G]}$ has no supercompact cardinals at all.
\end{proof}

An essentially similar argument works with strong cardinals; we omit the proof.

\begin{theorem}\label{Theorem.ProperClassStrongNotLargeInHOD}\
There is a class forcing notion $\P$ forcing that
 \begin{enumerate}
  \item All strong cardinals of the ground model are preserved and no new strong cardinals are created.
  \item There are no strong cardinals in the $\HOD$ of the extension.
  \item The strong cardinals of the extension are not weakly compact in the $\HOD$ of the extension.
 \end{enumerate}
One may also ensure that the \GCH\ holds in the extension and its $\HOD$.
\end{theorem}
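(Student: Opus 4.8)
We adapt the proof of Theorem~\ref{Theorem.ProperClassSCNotLargeInHOD}, following the template of Theorem~\ref{Theorem.ProperClassMeasurablesNotLargeInHOD}, replacing supercompactness everywhere by strongness. In the interesting case the ground model $V$ has an unbounded class of strong cardinals. First I would pass to a preparatory extension $\Vbar=V[F]$, exactly as in Lemma~\ref{Lemma.MeasurableCCAcoding} but arranged for strong cardinals, so that the \GCH\ holds at every inaccessible cardinal, every set of ordinals of $\Vbar$ is coded into the \GCH\ or $\Diamond^*$ patterns at the triple successors $\delta^{+++}$ of the $\beth$-fixed points $\delta=\beth_\delta$, and moreover each strong cardinal $\kappa$ has its strongness made indestructible by $\Add(\kappa,1)$ followed by any $\leqkappa$-closed forcing; this degree of indestructibility for strong cardinals is available by the Gitik--Shelah method or the lottery preparation \cite{GitikShelah89, Hamkins2000:LotteryPreparation, Hamkins2009:TallCardinals, Cody2013:EastonsTheoremInThePresenceOfWoodinCardinals}. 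As in the measurable case, the preparatory forcing $\bar\P$ preserves all strong cardinals and, by the closure-point analysis of \cite{Hamkins2003:ExtensionsWithApproximationAndCoverProperties}, creates no new ones.

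Next I would set $\P=\Pi_\kappa(\Add(\kappa,1)*\Rdot(\kappa))$, the Easton-support product over the strong cardinals $\kappa$ of $\Vbar$, where as before each $\Add(\kappa,1)$ factors as $\SS_\kappa*\Tdot_\kappa$, adding a weakly homogeneous $\kappa$-Suslin tree $T_\kappa$ and then a branch, and $\Rdot(\kappa)$ codes $T_\kappa$ into the \GCH\ or $\Diamond^*$ patterns on the next $\kappa$ many regular cardinals above $\kappa^{+++}$. For a $\Vbar$-generic $G\of\P$, the final model is $\Vbar[G]$. The computation $\HOD^{\Vbar[G]}=\Vbar[\vec T]$ goes through verbatim as in Theorem~\ref{Theorem.ProperClassMeasurablesNotLargeInHOD}: the forward inclusion by weak homogeneity of $\Pi_\kappa(\R(\kappa)\times T_\kappa)$ and Lemma~\ref{Lemma.WeaklyHomogenousControlsHOD}, the reverse inclusion because the coding by $F$ and the coding of each $T_\kappa$ are preserved into $\Vbar[G]$. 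Likewise the argument that $\Vbar[\vec T]$ has no strong cardinals and that the strong cardinals of $\Vbar$ are not weakly compact there is unchanged: at each strong $\kappa$ the tree $T_\kappa$ remains $\kappa$-Suslin, since the forcing above $\kappa$ adds no subsets of $\kappa$ and the tree-adding product $\Pi_{\delta<\kappa}\SS_\delta$ below $\kappa$ is productively $\kappa$-c.c.\ and so cannot add a cofinal branch, whence the tree property, and hence weak compactness and strongness, fails for $\kappa$ in $\Vbar[\vec T]$.

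The one place requiring genuine new work is the preservation of strongness under the product, and specifically under the bottom part $\P_\kappa$. The forcing $\P^\kappa$ at $\kappa$ and above is $\ltkappa$-directed closed (it is $\Add(\kappa,1)*\Rdot(\kappa)$ followed by $\leqkappa$-closed forcing), so the indestructibility secured in $\Vbar$ preserves the strongness of $\kappa$ to $\Vbar[G^\kappa]$; and when $\kappa$ is not a limit of strong cardinals, $\P_\kappa$ is small relative to $\kappa$ and preservation follows from the \Levy--Solovay lifting. The remaining case is that of a strong limit of strong cardinals, handled by a Kunen--Paris style lift: fix a $\theta$-strong embedding $j:\Vbar[G^\kappa]\to M$ with $\theta$ a suitably large strong limit of cofinality above $\kappa$ (so $\theta^{\ltkappa}=\theta$), with $V_\theta^{\Vbar[G^\kappa]}\of M$ and with $\kappa$ not $\theta$-strong in $M$, so that $M$ has no strong cardinals in $[\kappa,\theta]$ and hence $j(\P_\kappa)=\P_\kappa\times\P_{\kappa,j(\kappa)}$ with the tail $\P_{\kappa,j(\kappa)}$ being $\leqtheta$-closed in $M$. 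Using the \GCH, the tail has at most $\theta^+$ dense subsets lying in $M$, and I would construct an $M$-generic filter $G^*$ by diagonalization in $\Vbar[G^\kappa]$, then lift to $j:\Vbar[G^\kappa][G_\kappa]\to M[j(G_\kappa)]$ with $j(G_\kappa)=G_\kappa\times G^*$.

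The hard part---and the essential difference from the supercompact proof of Theorem~\ref{Theorem.ProperClassSCNotLargeInHOD}---is exactly this diagonalization: a strongness target $M$ satisfies only $V_\theta\of M$ and is not closed under $\theta$-sequences, so one cannot simply quote $M^\theta\of M$ to see that the partial descending sequences lie in $M$ and thereby obtain lower bounds at limit stages of cofinality in $[\kappa,\theta]$. I expect to resolve this in the manner standard for strong cardinals, exploiting the high closure of the tail forcing $\P_{\kappa,j(\kappa)}$ in $M$ together with its product structure, whereby lower bounds are computed coordinatewise at coordinates above $\theta$, and the $\ltkappa$-closure of $M$, so that a careful recursive construction keeps the relevant initial segments inside $M$. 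Once strongness is preserved, statements (1)--(3) follow as in Theorem~\ref{Theorem.ProperClassMeasurablesNotLargeInHOD}, and the final claim that the \GCH\ may be secured in both $\Vbar[G]$ and its \HOD\ is obtained, as there, by starting from a model of \GCH\ and using exclusively the $\Diamond^*$ coding.
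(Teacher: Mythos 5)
The paper does not actually give a proof of this theorem---it says only that ``an essentially similar argument works with strong cardinals'' and omits it---so your proposal is in effect an attempt to supply the omitted argument, and your overall strategy (the preparation as in Lemma~\ref{Lemma.MeasurableCCAcoding} adapted to strong cardinals, the Easton-support product $\Pi_\kappa(\Add(\kappa,1)*\Rdot(\kappa))$ over the strong cardinals, the identification $\HOD^{\Vbar[G]}=\Vbar[\vec T]$ by homogeneity and coding, and the Suslin-tree argument defeating weak compactness there) is exactly the one the paper intends. You have also correctly isolated the one point where the supercompact proof of Theorem~\ref{Theorem.ProperClassSCNotLargeInHOD} does not transfer verbatim: lifting the embedding through $\P_\kappa$ when $\kappa$ is a strong limit of strong cardinals.

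Your treatment of that point, however, has a genuine gap. Since $M$ is only a $\theta$-strongness target, you have (at best) $M^\kappa\of M$, yet you propose to diagonalize against all of the up-to-$\theta^+$ many dense subsets of the tail $\P_{\kappa,j(\kappa)}$ lying in $M$. Any such diagonalization must pass through limit stages of cofinality greater than $\kappa$, where the initial segment of your descending sequence need not be an element of $M$, so the $\leqtheta$-closure of the tail \emph{in $M$} gives you nothing; and taking lower bounds ``coordinatewise at coordinates above $\theta$'' does not repair this, because the combined lower bound is a $\theta$-sized union of elements of $M$ and there is no reason for it to be a \emph{member} of $M$, which is what the Easton-support requirement and the very definition of the poset in $M$ demand. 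The standard repair is to shrink the family of dense sets before diagonalizing: take $j$ to be an extender embedding with $M=\set{j(f)(a)\st f\in \Vbar[G^\kappa],\ a\in[\theta]^{<\omega}}$ and $M^\kappa\of M$. For each $f\colon[\kappa]^{<\omega}\to V_{\kappa+1}$ whose values are dense open subsets of the relevant tails of $\P_\kappa$---and under \GCH\ there are only $\kappa^+$ many such $f$---the family $j(f)\image[\theta]^{<\omega}$ is an element of $M$ of size $\theta$ there, so by the $\leqtheta$-distributivity of the tail in $M$ the intersection $D_f$ of its dense open members is dense open; and every dense open subset of the tail lying in $M$ is some $j(f)(a)$ and hence contains some $D_f$. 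A descending sequence of length $\kappa^+$ meeting each $D_f$ therefore suffices, and now every proper initial segment has size at most $\kappa$, hence lies in $M$, where closure of the tail supplies lower bounds. With this counting argument in place your lift goes through, and the remaining steps are as you describe. (You should also note, as a secondary point, that your assertion that $M$ has no strong cardinals in $[\kappa,\theta]$ needs the reflection fact that a ${<}\delta$-strong cardinal below a strong cardinal $\delta$ is fully strong, the analogue of what the paper uses silently in the supercompact case.)
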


Indeed, the method works with many other kinds of large cardinals, including strongly unfoldable cardinals, strongly Ramsey cardinals, tall cardinals and many others, which we invite the reader to check. Let us consider the case of weakly compact cardinals.

\begin{theorem}\
There is a class forcing notion $\P$, preserving all weakly compact cardinals, creating no new weakly compact cardinals, and forcing that there are no weakly compact cardinals in the \HOD\ of the extension. One may also ensure that the \GCH\ holds in the extension and its \HOD.
\end{theorem}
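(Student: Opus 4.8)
The plan is to adapt the proper-class template of Theorem~\ref{Theorem.ProperClassMeasurablesNotLargeInHOD}, but now performing the Suslin-tree-coding forcing at \emph{every} weakly compact cardinal rather than only at the measurable ones. First I would pass to a preparatory model $\Vbar=V[F]$, obtained exactly as in Lemma~\ref{Lemma.MeasurableCCAcoding} (but with the preparation now arranged so that every weakly compact cardinal $\kappa$ is indestructible by $\Add(\kappa,1)$, using a suitable Easton-support iteration adding Cohen subsets at inaccessible cardinals), in which the \GCH\ holds at every inaccessible, every set of ordinals is coded into the \GCH\ or $\Diamond^*$ pattern at the triple successors $\delta^{+++}$ of the $\beth$-fixed points $\delta=\beth_\delta$, and whose forcing $\bar\P$ admits a closure point. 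Over $\Vbar$ I would then force with the Easton-support product $\P=\Pi_\kappa(\Add(\kappa,1)*\Rdot(\kappa))$ taken over all weakly compact cardinals $\kappa$, where $\Add(\kappa,1)\iso\SS_\kappa*\Tdot_\kappa$ adds a weakly homogeneous $\kappa$-Suslin tree $T_\kappa$ and then forces with it, and $\Rdot(\kappa)$ codes $T_\kappa$ into the \GCH\ or $\Diamond^*$ pattern at the next $\kappa$ many regular cardinals above $\kappa^{+++}$. The final model is $\Vbar[G]$.

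Next I would verify the three conclusions along the lines of Theorem~\ref{Theorem.ProperClassMeasurablesNotLargeInHOD}. For preservation of weak compactness of a given $\kappa$, the tail forcing $\P^\kappa$ at coordinates $\geq\kappa$ is handled by the indestructibility of $\kappa$ under $\Add(\kappa,1)$ together with the $\leqkappa$-closure/$\leqkappa$-distributivity of the coding above $\kappa$, so it suffices to treat the initial segment $\P_\kappa$, which is below $\kappa$ and small enough not to destroy weak compactness (weak compactness being preserved by forcing of size less than $\kappa$, and by the productively $\kappa$-c.c.\ product of the trees below $\kappa$). Since $\bar\P*\P$ has a closure point, the main theorem of \cite{Hamkins2003:ExtensionsWithApproximationAndCoverProperties} guarantees that no new weakly compact cardinals are created, so $V$, $\Vbar$, and $\Vbar[G]$ have the same weakly compact cardinals, giving~(1). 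For the \HOD\ computation I would argue exactly as before that $\HOD^{\Vbar[G]}=\Vbar[\vec T]$: the forward inclusion follows because $\Pi_\kappa(\R(\kappa)\times T_\kappa)$ is weakly homogeneous over $\Vbar[\vec T]$ (both the coding and the trees being weakly homogeneous), whence Lemma~\ref{Lemma.WeaklyHomogenousControlsHOD} applies; the reverse inclusion follows because $F$ codes every set of ordinals of $\Vbar$ into a pattern preserved by $G$, while the trees $T_\kappa$ are explicitly coded by $\Rdot(\kappa)$, so $\vec T$ is definable in $\Vbar[G]$.

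Finally I would show $\Vbar[\vec T]$ has no weakly compact cardinals, which yields both~(2) and (since the weakly compact cardinals of $\Vbar[G]$ coincide with those of $\Vbar$) statement~(3) simultaneously. Given a weakly compact $\kappa$ of $\Vbar[\vec T]$, it was weakly compact in $\Vbar$ and hence a coordinate at which $T_\kappa$ was added; the forcing above $\kappa$ adds no subsets of $\kappa$ and so cannot add a branch through $T_\kappa$, while the product $\Pi_{\delta<\kappa}\SS_\delta$ of the lower trees is productively $\kappa$-c.c.\ and therefore cannot add a cofinal branch through the $\kappa$-Suslin tree $T_\kappa$ either, so $T_\kappa$ remains Suslin in $\Vbar[\vec T]$ and the tree property fails at $\kappa$. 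The \GCH\ in $\Vbar[G]$ and its \HOD\ is arranged, as in the earlier theorems, by starting from a \GCH\ model and using exclusively the $\Diamond^*$ coding. \textbf{The main obstacle} I anticipate is the preservation argument of clause~(1): unlike the measurable case, where one fixes a normal ultrapower $j:V_1\to M$ in which $\kappa$ is not measurable in $M$ and diagonalizes to build the master-condition generic, weak compactness must be preserved without an external ultrapower. Here I expect to rely on the characterization of weak compactness via $\kappa$-models and elementary embeddings $j:M\to N$ with critical point $\kappa$, lifting such an embedding through the small forcing $\P_\kappa$ directly, and the delicate point is checking that the productively $\kappa$-c.c.\ tree product below $\kappa$ together with the coding genuinely preserves the requisite embeddings rather than merely the combinatorial tree property.
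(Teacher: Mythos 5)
Your proposal follows the paper's proof essentially verbatim: the same preparation $\Vbar=V[F]$ with indestructibility of weak compactness under $\Add(\kappa,1)$, the same Easton-support product $\Pi_\kappa(\Add(\kappa,1)*\Rdot(\kappa))$ over the weakly compact cardinals, the same identification $\HOD^{\Vbar[G]}=\Vbar[\vec T]$ via weak homogeneity and the coding, and the same Suslin-tree argument that $\Vbar[\vec T]$ has no weakly compact cardinals. The ``delicate point'' you flag at the end is resolved in the paper exactly as you anticipate --- one lifts a weak compactness embedding $j:M\to N$ of $\Vbar[g_\kappa]$ to $j:M[G_\kappa]\to N[j(G_\kappa)]$ by diagonalizing against the $\kappa$ many dense subsets of the $\leqkappa$-closed quotient $\P_{\kappa,j(\kappa)}$ lying in the $\kappa$-sized model $N$ (no \GCH\ at $\kappa$ is needed, unlike the measurable case) --- so your earlier parenthetical appeal to smallness of $\P_\kappa$ and productive $\kappa$-c.c.-ness for preserving weak compactness, which would not suffice when $\kappa$ is a limit of weakly compact cardinals, should simply be replaced by that lifting argument.
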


\begin{proof}
Once again, in the interesting case there will be many weakly compact cardinals in $V$. As in Theorem~\ref{Theorem.ProperClassMeasurablesNotLargeInHOD}, we force to $\Vbar=V[F]$ in such a way that preserves all weakly compact cardinals and creates no new weakly compact cardinals, such that every set of ordinals in $\Vbar$ is coded into the \GCH\ or $\Diamond^*$ patterns at the triple successors of the $\beth$-fixed points, and such that every weakly compact cardinal $\kappa$ of $\Vbar$ is indestructible by further forcing with $\Add(\kappa,1)$. Next, let $\P$ be the Easton-support product $\Pi_{\kappa}(\Add(\kappa,1)*\Rdot(\kappa))$, taken over all weakly compact cardinals $\kappa$, where $\Rdot(\kappa)$ is as before the forcing to code the $\kappa$-Suslin tree added by the first factor into the \GCH\ or $\Diamond^*$ patterns at the next $\kappa$ many regular cardinals above $\kappa$, starting above $\kappa^{+++}$ (one could alternatively perform forcing at every inaccessible cardinal). Suppose that $G\of\P$ is $\Vbar$-generic and consider the extension $\Vbar[G]$, our final model. One may see that every weakly compact cardinal of $V$ is preserved to $\Vbar[G]$, using the same argument as in the case of measurable cardinals in Theorem~\ref{Theorem.ProperClassMeasurablesNotLargeInHOD}, namely, if $\kappa$ is weakly compact in $\Vbar$, then it suffices to argue that $\kappa$ is weakly compact in $\Vbar[G_\kappa][g_\kappa]$. The forcing to add the Cohen set $g_\kappa$ preserves weak compactness by the indestructibility assumption on $\Vbar$, and one may lift any weak compactness embedding $j:M\to N$ of $\Vbar[g_\kappa]$ to $j:M[G_\kappa]\to N[j(G_\kappa)]$ by diagonalizing to produce $G_{\kappa,j(\kappa)}$ as in the measurable case (this works whether or not there is forcing at stage $\kappa$ in $j(\P_\kappa)$, and one does not need \GCH\ at $\kappa$ here in the weakly compact case). Thus, all weakly compact cardinals of $V$ are preserved to $\Vbar[G]$. No new weakly compact cardinals are created in $\Vbar[G]$ (or in $\Vbar[\vec T]$) by the main result of~\cite{Hamkins2003:ExtensionsWithApproximationAndCoverProperties}. Finally, $\HOD^{\Vbar[G]}=\Vbar[\vec T]$ for the same reasons as in the previous cases, and this is a model having no weakly compact cardinals, for the reasons explained in the earlier cases.
\end{proof}

\section{Questions and Limitations}\label{Section.Limitations}

There are numerous natural questions left open by the results of this paper. Several large cardinal notions, such as the extendible cardinals, are missing from Corollary~\ref{Corollary.LCNotLargeInHOD}. Furthermore, although in Section~\ref{Section.ProperClassLC} we arranged models with a proper class of large cardinals of a particular type, with none of that type in \HOD, what we did not arrange is that \HOD\ had no large cardinals of smaller type.

For example, those results did not provide a model with a measurable cardinal, but no weakly compact cardinal in \HOD. Such a situation, however, is impossible. Specifically, Philip Welch pointed out that if kappa is an $\omega$-\Erdos\ cardinal in $V$, then it is $\omega$-\Erdos\ in any inner model (by an absoluteness argument), and therefore there are also weakly compact cardinals below $\kappa$ in any inner model. Similarly, if $\alpha$ is countable in $\HOD$ and there is an $\alpha$-\Erdos\ cardinal in $V$, then there is one in $\HOD$ by the same absoluteness argument. A similar observation about $\alpha$-\Erdos\ cardinals was made in \cite[p.~3]{Friedman2002:OsharpAndInnerModels}. In the same vein, Gitik and Hamkins \cite{GitikHamkins:LargeCardinalsNecessarilyLargeInHOD} proved that if $\kappa$ is a measurable cardinal, then for any transitive inner model $W$, including $W=\HOD$, the set of cardinals below $\kappa$ that are weakly compact, ineffable, and even superstrongly $\ltkappa$-unfoldable has normal measure one in $V$; indeed, if $\kappa$ is merely subtle, then the set of cardinals below $\kappa$ that are superstrongly $\ltkappa$-unfoldable in $W$ is stationary in $V$ and hence also in $W$.

Meanwhile, many other instances remain open.

\begin{question}\label{Question.LCbutNoneInHOD?}
 Can there be a strong cardinal, or a proper class of strong cardinals, but no measurable cardinal in \HOD? Can there be an extendible cardinal which is not weakly compact in \HOD? Can there be a proper class of extendible cardinals, but no supercompact cardinal in $\HOD$? Can there be a supercompact cardinal, which is strongly compact but not supercompact in $\HOD$?
\end{question}

There are infinitely many variations of these questions. In the supercompact cardinal context, W. Hugh Woodin has announced in email correspondence with the first and third authors the following remarkable implication, for which he has sketched a proposed proof making use of the \HOD\ dichotomy theorem of \cite{Woodin2010:SuitableExtenderModelsI}.

\begin{conjecture}[Woodin]\label{Conjecture.Woodin}
 If there is a supercompact cardinal, then there is a measurable cardinal in \HOD.
\end{conjecture}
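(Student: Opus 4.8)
The plan is to derive the existence of a measurable cardinal in $\HOD$ from Woodin's \HOD\ Dichotomy Theorem \cite{Woodin2010:SuitableExtenderModelsI}, by extracting such a cardinal from each side of the dichotomy, so that the conclusion holds no matter which alternative obtains. Let $\kappa$ be supercompact. The dichotomy sorts the global relationship between $V$ and $\HOD$ into two mutually exclusive alternatives: on the one side $\HOD$ is \emph{close} to $V$, in the sense that $\HOD$ is a weak extender model for the supercompactness of $\kappa$; and on the other side $\HOD$ is \emph{far} from $V$, in the sense that every regular cardinal above $\kappa$ is $\omega$-strongly measurable in $\HOD$. My strategy is to locate a measurable cardinal of $\HOD$ in either case.

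On the far side the conclusion is essentially immediate. Being $\omega$-strongly measurable in $\HOD$ is a technical strengthening of measurability in $\HOD$; in particular, any cardinal that is $\omega$-strongly measurable in $\HOD$ is already measurable in $\HOD$. Since in this case there is a proper class of such cardinals above $\kappa$, we obtain many measurable cardinals in $\HOD$, as desired.

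On the close side I would appeal to the Universality Theorem for weak extender models. If $\HOD$ is a weak extender model for the supercompactness of $\kappa$, then $\HOD$ is universal for the extenders of $V$ that it can reflect, and this universality propagates the supercompactness of $\kappa$ downward: one shows that the normal fine measures witnessing $\lambda$-supercompactness of $\kappa$ in $V$ give rise, through the closure and universality of $\HOD$, to the corresponding measures inside $\HOD$, so that $\kappa$ is supercompact, and in particular measurable, in $\HOD$. Thus $\HOD$ again contains a measurable cardinal, namely $\kappa$ itself, and combining the two cases completes the argument.

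The main obstacle, and the genuinely delicate point of the plan, is justifying the use of the dichotomy from a single supercompact cardinal. The \HOD\ Dichotomy Theorem is most naturally stated and proved under the stronger hypothesis that $\kappa$ is \emph{extendible}, whose reflection is exactly what underwrites both the clean global alternative and, on the close side, the passage from the \HOD\ Hypothesis to the conclusion that $\HOD$ is a weak extender model. With only supercompactness in hand one must either verify that the relevant reflection and universality arguments localize to the supercompactness embeddings, or else replace the global dichotomy by a local version sufficient to produce one measurable cardinal of $\HOD$. Carrying out this localization, and in the close case pushing the universality machinery all the way through to full measurability of $\kappa$ in $\HOD$ rather than some weaker residue of supercompactness, is where the substantive work lies.
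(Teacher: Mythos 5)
There is no proof in the paper to compare against: the statement is labelled a \emph{conjecture}, attributed to Woodin, for which the paper reports only that Woodin ``has sketched a proposed proof making use of the \HOD\ dichotomy theorem'' of \cite{Woodin2010:SuitableExtenderModelsI}. Your outline follows exactly that reported route --- split into the ``far'' case (a proper class of $\omega$-strongly measurable cardinals in $\HOD$, each of which is measurable there) and the ``close'' case (where $\HOD$ is a weak extender model for supercompactness of $\kappa$, which by definition already puts normal fine measures, and hence measurability of $\kappa$, inside $\HOD$). Both of those case analyses are fine as far as they go.

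The genuine gap is the one you name yourself and then set aside: the \HOD\ dichotomy theorem, as proved in \cite{Woodin2010:SuitableExtenderModelsI}, takes an \emph{extendible} cardinal as its hypothesis, and it is not known to follow from a single supercompact cardinal. Deriving a usable (even local) form of the dichotomy from supercompactness alone is not a routine localization to be deferred; it is precisely the open content of the statement, which is why the paper records it as a conjecture rather than a theorem. A proof that reduces the conjecture to ``carry out the localization, which is where the substantive work lies'' has not proved anything --- it has restated the problem. Until that step is supplied, your argument establishes only the known implication from an extendible cardinal, not the conjectured one from a supercompact cardinal.
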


This result would provide a general limitation on the phenomenon we have been exploring in this paper, and one may view Question~\ref{Question.LCbutNoneInHOD?} as asking whether or not there are analogues of Woodin's conjecture at large cardinals below supercompact.

The theme of this paper has been to show that there can be cardinals that are large in $V$, but small in $\HOD$. A dual theme would be to ask for cardinals that are large in $\HOD$, but small in $V$. On this note, Cummings, Friedman and Golshani \cite[thm 1.1]{CummingsFriedmanGolshani:CollapsingTheCardinalOfHOD} proved that $\alpha^+$ can be greater than $(\alpha^+)^\HOD$ for all cardinals $\alpha$ below a measurable cardinal, and this can happen for a club of
cardinals $\alpha$ below a supercompact cardinal. They have conjectured that $\alpha^+$ can be measurable in $\HOD$ for club-many cardinals $\alpha$  below a supercompact cardinal. In email correspondence, Woodin has conjectured that ``club-many" cannot be replaced by ``all", even with ``measurable" replaced by ``inaccessible".

It is also natural to broaden the theme of this paper beyond $\HOD$ and definability altogether, by asking whether the results extend to other natural inner models and extensions. Consider, for example, the stable core $\mathbb S$ of \cite{Friedman2012:TheStableCore}. As $V$ is generic over $\mathbb S$, one might expect results for the stable core similar to those we have obtained for $\HOD$. This would present new challenges, however, as it is far more difficult to code a set into the stable core than it is to make it ordinal-definable. Meanwhile, we also consider the questions for ground models or inner models generally:

\begin{question}\label{Question.VsubsetW}
Can there be a supercompact cardinal in a forcing extension $V[G]$, if there are no measurable cardinals in $V$?
\end{question}

There are again innumerable variations of these questions to other large cardinal concepts, and we take ourselves to have asked an entire scheme of questions here. For the strong-cardinal analogue of Question~\ref{Question.VsubsetW}, we have the sketch of a proof that there can be a model with no measurable cardinals, but with a strong cardinal in a forcing extension.

The second author proved in \cite{Friedman2002:OsharpAndInnerModels} that if there are no inaccessible cardinals in $L[0^\sharp]$, then there is an $M \subsetneq L[0^\sharp]$ in which \GCH\ holds and there is no cardinal $\kappa$ which is $\kappa$-Mahlo. It is natural to ask whether similar results hold for larger core models.

\section{Appendix: background material}\label{Section.Background}

We have made use in this article of several general facts and some other background material, whose proofs we include here for completeness.

In the main argument, we used the following fact, due to Kunen, that the forcing $\Add(\kappa,1)$ to add a Cohen subset to an inaccessible cardinal $\kappa$ can be factored as first adding a certain weakly homogeneous $\kappa$-Suslin tree and then forcing with that tree.

\begin{lemma}[Kunen \cite{Kunen78:SaturatedIdeals}]\label{Lemma.Kunen}
If $\kappa$ is inaccessible, then there is a strategically $\ltkappa$-closed notion of forcing $\SS$ of size $\kappa$ such that forcing with $\SS$ adds a weakly homogeneous $\kappa$-Suslin tree $T$ and the combined forcing $\SS*\Tdot$ is forcing-equivalent to the forcing $Add(\kappa,1)$ to add a Cohen subset of $\kappa$.
\end{lemma}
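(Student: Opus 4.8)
The plan is to reduce the asserted equivalence $\SS*\Tdot\iso\Add(\kappa,1)$ to a structural fact about forcing notions and then to realize $\SS$ as the natural forcing that builds a normal $\kappa$-Suslin tree by initial approximations. The structural fact I would invoke is the standard isomorphism lemma: since $\kappa$ is inaccessible we have $\kappa^{\ltkappa}=\kappa$, and any separative, $\ltkappa$-closed notion of forcing of size $\kappa$ in which every condition has two incompatible extensions has a dense subset isomorphic to the tree ${}^{\ltkappa}2$ of binary sequences of length less than $\kappa$ under end-extension. Since $\Add(\kappa,1)$ itself has such a dense subset, it suffices to produce $\SS$ and $\Tdot$ so that the two-step iteration $\SS*\Tdot$ has a dense subset meeting these hypotheses, while separately guaranteeing that the first step is strategically $\ltkappa$-closed and that it adds a weakly homogeneous $\kappa$-Suslin tree.

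First I would let $\SS$ be the forcing whose conditions are normal trees $t\of{}^{\leqalpha}2$ of successor height $\alpha+1\lt\kappa$, ordered by end-extension, so that the generic object is a tree $T$ of height $\kappa$. The two points to verify about $\SS$ in isolation are that it is strategically $\ltkappa$-closed of size $\kappa$ — the strategy dictates, at limit stages of the game, which nodes to place on the new limit level, so that no decreasing play inadvertently manufactures a cofinal branch — and that the generic tree $T$ is $\kappa$-Suslin and weakly homogeneous. Suslinness is obtained by the familiar sealing argument: given a name for a maximal antichain, a density argument extends any condition to a level $\alpha$ at which every node is already above some element named so far, so the antichain cannot grow to size $\kappa$. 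Inaccessibility keeps each level of size $\lt\kappa$, and the strategic closure is exactly what lets the sealing survive the limit stages. Weak homogeneity I would arrange by building $\SS$ symmetrically, so that the generic tree admits automorphisms acting transitively on each level.

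The heart of the equivalence is the choice of dense subset of $\SS*\Tdot$. A condition of the second step $\Tdot$ is a node of the generic tree, and forcing below it pushes the generic branch through that node. I would therefore take $D\of\SS*\Tdot$ to consist of pairs $(t,x)$, where $t$ is a concrete condition of $\SS$ of successor height and $x$ is a node on the top level of $t$, the current approximation to the branch, ordered by simultaneous end-extension of the tree together with extension of the marked node. This set is dense, separative, of size $\kappa$ by $\kappa^{\ltkappa}=\kappa$, and splitting, since normality lets the marked node $x$ be extended in two incompatible ways at the next level. Crucially, $D$ is genuinely $\ltkappa$-closed, not merely strategically so: given a decreasing sequence $(t_i,x_i)_{i\lt\delta}$ with $\delta\lt\kappa$, the marked nodes $x_i$ determine a single cofinal branch through $\Union_i t_i$, which pins down exactly the node one must place on the limit level $\delta$, so a lower bound $(t^*,x^*)$ always exists. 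Thus $D$ satisfies the hypotheses of the isomorphism lemma, whence $D\iso{}^{\ltkappa}2$ and $\SS*\Tdot\iso\Add(\kappa,1)$. The absence of genuine closure for $\SS$ alone is no obstruction: committing to the branch in the second coordinate is precisely what repairs the limit behaviour.

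I expect the main obstacle to be the verification that the generic tree is genuinely $\kappa$-Suslin, that is, the antichain-sealing argument carried out compatibly with the strategic play through limit stages, together with the check that the strategy never accumulates a cofinal branch below $\kappa$ and never blows up a level to size $\kappa$. By contrast, the isomorphism step is routine once $D$ is seen to be $\ltkappa$-closed, separative, splitting and of size $\kappa$, which is the whole reason for marking the branch node inside the conditions of $D$.
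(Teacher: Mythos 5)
Your overall architecture is the intended one: the paper's own treatment of this lemma is essentially just a citation of Kunen together with exactly the key point you identify, namely that $\SS*\Tdot$ has a $\ltkappa$-closed dense subset of size $\kappa$ and that every nontrivial separative such forcing is equivalent to $\Add(\kappa,1)$ when $\kappa^{\ltkappa}=\kappa$. Your choice of $\SS$ (normal trees of successor height under end-extension) and of the dense set $D$ of pairs $(t,x)$ with $x$ a top-level node of $t$ is the standard realization of that plan.

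There is, however, a genuine gap at the crux, the $\ltkappa$-closure of $D$. Given a descending sequence $(t_i,x_i)_{i\lt\delta}$, the marked nodes determine a cofinal branch through $t=\Union_i t_i$ and hence one node for the new top level; but a lower bound $(t^*,x^*)$ must be a \emph{condition} of $\SS$, that is, a normal tree, and normality requires every node of $t$ --- not only those on the marked branch --- to have an extension on the new top level. So you need a cofinal branch of $t$ through \emph{every} node, and this is precisely the difficulty that made $\SS$ itself only strategically closed: when $\delta$ has uncountable cofinality, a naive attempt to thread a branch through an arbitrary node can die at a limit level of $t$, whose nodes realize only some of the branches below it. The standard repair is to build homogeneity into the conditions themselves, requiring each condition to admit tree-automorphisms acting transitively on every level; then the one branch supplied by the marked nodes can be pushed by automorphisms onto every node of $t$, producing fewer than $\kappa$ many cofinal branches whose sups form a legitimate normal top level of size less than $\kappa$. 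This is the very homogeneity you mention only as an afterthought for making the generic tree weakly homogeneous; it is in fact load-bearing for the closure of $D$ (and for the strategic closure of $\SS$), and without it your limit-stage step does not produce a condition. Once that is repaired, the remainder of your outline --- the antichain-sealing argument for Suslinness and the folklore isomorphism of a separative, splitting, $\ltkappa$-closed poset of size $\kappa$ with a dense copy of ${}^{\ltkappa}2$ --- is correct.
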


\noindent See also the detailed accounts in \cite{CodyGitikHamkinsSchanker2015:LeastWeaklyCompact,GitmanWelch2011:Ramsey-likeCardinalsII}. The key point is that $\SS*\Tdot$ has a $\ltkappa$-closed dense subset of size $\kappa$, and all such (nontrivial) forcing is equivalent to $\Add(\kappa,1)$.

A forcing notion $\Q$ is {\df weakly homogeneous}, if for any two conditions $p,q\in\Q$, there is an automorphism $\pi$ of $\Q$ for which $\pi(q)$ and $p$ are compatible. It follows that if $\varphi$ is a statement in the forcing language involving only check names $\check x$, and some condition $p$ forces $\varphi$, then every condition forces $\varphi$, since otherwise some $q$ forces $\neg\varphi$, but in this case $\pi(q)$ will also force $\neg\varphi$, which is impossible if it is compatible with $p$. A weaker variant of this property is that $\Q$ is {\df locally homogeneous}, if for any $p,q\in\Q$, there are extensions $p^*\leq p$ and $q^*\leq q$ and an isomorphism $\pi$ of $\Q\restrict p^*$ with $\Q\restrict q^*$; it follows again in this case that all conditions force the same assertions with only ground model parameters.

\begin{lemma}[Folklore]\label{Lemma.WeaklyHomogenousControlsHOD}
 If\/ $\Q$ is a locally homogeneous notion of forcing and $G\of\Q$ is $V$-generic, then $\HOD^{V[G]}\of\HOD(\Q)^V$. In particular, if $\Q$ is also ordinal definable in $V$, then $\HOD^{V[G]}\of\HOD^V$.
\end{lemma}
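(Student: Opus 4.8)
The plan is to show that any set of ordinals lying in $\HOD^{V[G]}$ already lies in $\HOD(\Q)^V$, since $\HOD$ is determined by its sets of ordinals and $\HOD(\Q)^V$ is a transitive model of $\ZFC$ containing all the ordinals. So suppose $A$ is a set of ordinals that is ordinal-definable in $V[G]$, say $A=\set{\alpha\st V[G]\satisfies\varphi(\alpha,\vec\beta)}$ for some formula $\varphi$ and ordinal parameters $\vec\beta$. Fix a name $\Adot$ and a condition $p\in G$ forcing that $\Adot$ is this definable set. The key idea is that membership $\alpha\in A$ should be decided uniformly by the homogeneity of $\Q$, independently of which generic filter below $p$ we chose, so that $A$ becomes definable in $V$ from $\Q$, $p$, and the ordinal parameters alone.

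First I would invoke local homogeneity in the following form. Given any two conditions $q_0,q_1\leq p$, there are extensions $q_0^*\leq q_0$ and $q_1^*\leq q_1$ together with an isomorphism $\pi$ of $\Q\restrict q_0^*$ with $\Q\restrict q_1^*$. Since the statement ``$\check\alpha\in\Adot$'' is expressed in the forcing language using only the check-names of the ordinal parameters (which $\pi$ fixes, as $\pi$ sends check-names to check-names), any such $\pi$ carries a forcing decision about $\check\alpha\in\Adot$ below $q_0^*$ to the corresponding decision below $q_1^*$. Consequently no condition below $p$ can force $\check\alpha\in\Adot$ while another forces $\check\alpha\notin\Adot$: if $q_0\forces\check\alpha\in\Adot$ and $q_1\forces\check\alpha\notin\Adot$, then passing to the isomorphic pieces $q_0^*,q_1^*$ yields a contradiction, exactly as in the weakly homogeneous case sketched before the lemma. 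Hence $p$ itself decides $\check\alpha\in\Adot$ for every ordinal $\alpha$.

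With this uniform decision in hand, I would define, working in $V$,
\[
 A' \ =\ \set{\alpha\st p\Pforces \check\alpha\in\Adot}.
\]
Because $p\in G$ and $p$ decides each such statement, a density/genericity argument gives $A'=A$: for each $\alpha$, if $p\forces\check\alpha\in\Adot$ then $\alpha\in A$, and if $p\forces\check\alpha\notin\Adot$ then $\alpha\notin A$, while the previous paragraph rules out any undecided case. The set $A'$ is defined in $V$ from the parameters $\Q$, $p$, $\Adot$, and $\vec\beta$; crucially $\Q$, $p$, and $\Adot$ are all available once we allow $\Q$ as a predicate, so $A'$ is ordinal-definable in $V$ from $\Q$ together with ordinal parameters, i.e.\ $A'\in\HOD(\Q)^V$. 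Therefore $A=A'\in\HOD(\Q)^V$, and since $A$ was an arbitrary set of ordinals in $\HOD^{V[G]}$, we conclude $\HOD^{V[G]}\of\HOD(\Q)^V$. For the final clause, if $\Q$ is itself ordinal-definable in $V$, then $\HOD(\Q)^V=\HOD^V$, giving $\HOD^{V[G]}\of\HOD^V$.

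The main obstacle I anticipate is the bookkeeping needed to keep the parameters genuinely ordinal (so that the resulting definition witnesses membership in $\HOD(\Q)$ rather than in some larger $\HOD$ of extra parameters). The name $\Adot$ need not be canonical, but one sidesteps this by noting that $A$ is ordinal-definable in $V[G]$, so rather than fixing an arbitrary name one defines $A'$ directly via the forcing relation applied to the \emph{defining formula} $\varphi$: that is, $A'=\set{\alpha\st p\Pforces\varphi(\check\alpha,\check{\vec\beta})}$, whose only parameters are $\Q$, $p$, and the ordinals $\vec\beta$ and $\alpha$, with $p$ itself definable from $\Q$ and ordinal data by choosing it least in a fixed ordinal-definable well-ordering of $\Q$. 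The other delicate point is verifying that the local (rather than weak) homogeneity still transports the decision correctly; here one must be careful that the isomorphism $\pi$ of restricted posets fixes the check-names of the ordinal parameters, which it does automatically since isomorphisms of forcing notions act trivially on ground-model check-names, so the argument goes through uniformly in both the weakly homogeneous and the locally homogeneous cases.
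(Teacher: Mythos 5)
Your proposal is correct and follows essentially the same route as the paper's (one-line) proof: local homogeneity forces every condition to decide statements with only check-name parameters the same way, so $A=\set{\alpha\st \one\forces\varphi(\check\alpha,\check{\vec\beta})}$ is definable in $V$ from $\Q$ and ordinals. Your detour through a name $\Adot$ and a ``least'' condition $p$ is unnecessary (and an ordinal-definable well-ordering of $\Q$ need not exist), but you correctly repair this at the end by applying the forcing relation directly to the defining formula, at which point one may simply take $p=\one$.
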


The point is that if $A\of\Ord$ is defined in $V[G]$ by $\alpha\in A\iff \varphi(\alpha,\beta)$, then we can define $A$ in the ground model as $\set{\alpha\st \one\forces\varphi(\check\alpha,\check\beta)}$, since all conditions must force the same assertions.

Several of our arguments use Easton's lemma:

\begin{lemma}[{\cite[lemma 15.17]{Jech:SetTheory3rdEdition}}]\label{Lemma.ClosureDistributive}
 Suppose that $G\times H$ is $V$-generic for $\P\times\Q$, where $\P$ is $\ltkappa$-closed and $\Q$ is $\kappa$-c.c.  Then $\P$ is $\ltkappa$-distributive in $V[H]$. In other words, $\Ord^\ltkappa\intersect V[G][H]\of V[H]$.
\end{lemma}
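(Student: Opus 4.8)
The plan is to prove Easton's lemma in the standard way, by establishing the distributivity of $\P$ in $V[H]$ directly from a density argument that exploits the closure of $\P$ and the chain condition of $\Q$. I would begin by fixing, in $V[H]$, a sequence $\langle D_\alpha \st \alpha<\lambda\rangle$ of dense open subsets of $\P$ with $\lambda<\kappa$, and aim to show their intersection is dense. Equivalently, given a condition $p\in\P$, I would find an extension $p^*\leq p$ lying in every $D_\alpha$. The standard move is to reflect this situation back into $V$: since $\Q$ is $\kappa$-c.c.\ and the sequence of dense sets is named by a $\Q$-name, I would work with a single name $\dot D$ and a condition $q\in\Q$ in the generic $H$ forcing the relevant properties.

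The core of the argument, which I would carry out in $V$, is a simultaneous fusion/diagonalization. I would build in $V$ a descending $\ltkappa$-sequence $\langle p_\alpha \st \alpha\leq\lambda\rangle$ of conditions in $\P$ together with, at each stage $\alpha$, a maximal antichain $A_\alpha\of\Q$ of conditions each deciding a particular value $\check q\forces \check{p}_{\alpha+1}\in\dot D_\alpha$ below $p_\alpha$. The key point is that the $\kappa$-c.c.\ of $\Q$ guarantees each antichain $A_\alpha$ has size less than $\kappa$, so that at each stage only fewer than $\kappa$ conditions of $\P$ must be amalgamated; the $\ltkappa$-closure of $\P$ then lets me pass below all of them, and also lets me take lower bounds at limit stages $\alpha\leq\lambda$ since $\lambda<\kappa$. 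Running this construction through all $\lambda$ stages produces in $V$ a condition $p_\lambda\leq p$ such that, for a dense set of $q\in\Q$, $q$ forces $p_\lambda$ to lie below an element of each $D_\alpha$; pulling back through the generic $H$ yields the desired $p^*\in\Intersect_{\alpha<\lambda}D_\alpha$ with $p^*\leq p$.

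The main obstacle, and the step deserving the most care, is the bookkeeping in the simultaneous diagonalization: one must ensure that the lower bounds taken in $\P$ respect the decisions made by \emph{all} conditions in each antichain $A_\alpha$ at once, rather than just a single condition of $\Q$. This is precisely where the interplay between the two hypotheses is essential --- the $\kappa$-c.c.\ of $\Q$ bounds the width of the antichains below $\kappa$, and the $\ltkappa$-closure of $\P$ supplies lower bounds for collections of that size. I would organize this by, at stage $\alpha$, choosing for each $a\in A_\alpha$ a name for a condition $p^a\leq p_\alpha$ with $a\forces \check{p}^a\in\dot D_\alpha$, and then letting $p_{\alpha+1}$ be a common lower bound in $\P$ of the (fewer than $\kappa$ many) conditions $p^a$; this guarantees that whichever element of $A_\alpha$ ends up in $H$, the condition $p_{\alpha+1}$ already meets $D_\alpha$.

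Finally, the reformulation in terms of $\Ord^\ltkappa$ is immediate: if $f:\lambda\to\Ord$ lies in $V[G][H]$ with $\lambda<\kappa$, then since $\P$ is $\ltkappa$-closed in $V$, the forcing $\P$ adds no such $f$ over $V[H]$ once we know $\P$ is $\ltkappa$-distributive there, so $f\in V[H]$. I expect this last equivalence to be routine and would state it without detailed computation.
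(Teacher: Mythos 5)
The paper offers no proof of this lemma --- it simply cites Jech --- so the only issue is whether your argument is sound, and there is one genuine gap. The problematic step is the amalgamation at stage $\alpha$: you choose, for each $a$ in the maximal antichain $A_\alpha\of\Q$, a condition $p^a\leq p_\alpha$ with $a\forces\check p^a\in\dot D_\alpha$, and then ``let $p_{\alpha+1}$ be a common lower bound in $\P$ of the (fewer than $\kappa$ many) conditions $p^a$.'' But $\ltkappa$-closure of $\P$ guarantees lower bounds only for \emph{descending chains} of length less than $\kappa$, not for arbitrary families of fewer than $\kappa$ conditions. If the $p^a$ are chosen independently for the various $a\in A_\alpha$, they will in general be pairwise incompatible --- already two incompatible extensions of $p_\alpha$ defeat the step --- and no common lower bound need exist. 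The hypothesis is closure, not directed closure, and even directed closure would not help, since the family $\set{p^a\st a\in A_\alpha}$ need not be directed.

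The repair is standard but is exactly the point you skate over: at stage $\alpha$ one builds the antichain in $\Q$ and the witnesses in $\P$ \emph{simultaneously by a sub-recursion}, producing a descending sequence $p_\alpha=r_0\geq r_1\geq\cdots\geq r_\xi\geq\cdots$ in $\P$ together with pairwise incompatible conditions $q_\xi\in\Q$ such that $q_\xi\forces\check r_{\xi+1}\in\dot D_\alpha$, taking lower bounds at limit sub-stages by $\ltkappa$-closure; the $\kappa$-c.c.\ of $\Q$ guarantees that this sub-recursion halts at some $\xi^*<\kappa$ with $\set{q_\xi\st\xi<\xi^*}$ maximal, and one sets $p_{\alpha+1}=r_{\xi^*}$. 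One must also take the $\dot D_\alpha$ to be names for dense \emph{open} sets (or pass to their downward closures), so that $p_{\alpha+1}\leq r_{\xi+1}$ still forces membership in $\dot D_\alpha$ below each $q_\xi$; you never invoke openness, but without it the final lower bound need not lie in $D_\alpha$. With these two corrections the rest of your outline --- the outer recursion of length $\lambda<\kappa$, pulling back through the generic $H$, and the routine reformulation in terms of $\Ord^\ltkappa\intersect V[G][H]\of V[H]$ --- goes through as you describe.
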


In our main argument, we often have need to perform forcing that ensures that a given set of ordinals becomes ordinal definable in the forcing extension. The general idea goes back to McAloon \cite{McAloon1971:ConsistencyResultsAboutOrdinalDefinability}, who forced $V=\HOD$ by various types of coding forcing. One quite commonly sees the forcing to code a given set into the \GCH\ pattern on a definable block of cardinals. This \GCH-controlling forcing is highly homogeneous and directed closed, and it is easy to see that the coding is preserved by any forcing below the place where the coding took place, since small forcing cannot affect the continuum function up high. But there are other coding methods, such as coding into the $\Diamond^*_\kappa$ pattern. The second author and his student Andrew Brook-Taylor \cite{Brooke-Taylor:Thesis,Brooke-Taylor2009:LargeCardinalsAndDefinableWellOrders} have emphasized the value of coding via the $\Diamond^*_\kappa$ pattern, in part because this can be done while preserving the \GCH, as well as various large cardinals. The main fact is the following. Recall that $\<D_\alpha\mid\alpha<\kappa>$ is a {\df $\Diamond^*_\kappa$-sequence}, where $\kappa$ is an uncountable regular cardinal, if $D_\alpha$ consists of at most $|\alpha|$ many subsets of $\alpha$ and for every $A\of\kappa$ there is a club $C\of\kappa$ such that $A\intersect\alpha\in D_\alpha$ for each $\alpha\in C$.

\begin{lemma}\label{Lemma.Diamond*Coding} One may force so as to control $\Diamond^*_\kappa$ as desired on the successor cardinals.
\begin{enumerate}
 \item \cite{Devlin1979:VariationsOnDiamond,Brooke-Taylor2009:LargeCardinalsAndDefinableWellOrders} If $\kappa$ is any regular cardinal, then $\Add(\kappa,\kappa^+)$ forces $\neg\Diamond^*_\kappa$.
 \item \cite{CummingsForemanMagidor2001:SquaresScalesAndStationaryReflection,Brooke-Taylor2009:LargeCardinalsAndDefinableWellOrders}
     For any infinite successor cardinal $\kappa^+$, there is a $\ltkappa^+$-closed forcing notion of size $2^{\kappa^+}$, which forces $\Diamond^*_{\kappa^+}$.
\end{enumerate}
\end{lemma}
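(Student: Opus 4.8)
The two clauses are proved separately; together they let one prescribe, via a product of the two forcings taken over a definable block of successor cardinals, exactly which cardinals carry $\Diamond^*$, and this is the coding used in the main arguments. The plan is as follows.

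For clause (1), suppose toward a contradiction that $\vec D=\langle D_\alpha\mid\alpha\lt\kappa\rangle$ is a $\Diamond^*_\kappa$-sequence in $V[G]$, where $G\of\Add(\kappa,\kappa^+)$ is $V$-generic. Since $\Add(\kappa,\kappa^+)$ is $\ltkappa$-closed, it adds no new subsets of any $\alpha\lt\kappa$, and so each $D_\alpha$, being a family of at most $|\alpha|\lt\kappa$ subsets of $\alpha$, already lies in $V$. A nice-name computation (using $\kappa^{\ltkappa}=\kappa$, so that the forcing is $\kappa^+$-c.c.) shows that $\vec D$, which has hereditary size $\kappa$, is added by only $\kappa$ of the $\kappa^+$ Cohen coordinates; fix $S\of\kappa^+$ with $|S|=\kappa$ and $\vec D\in W:=V[G\restriction S]$. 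Now pick any coordinate $\xi\notin S$ and let $A=A_\xi$ be the corresponding Cohen subset, which is $\Add(\kappa,1)$-generic over $W$. First I would show by a density argument that the evasion set $E=\set{\alpha\lt\kappa\mid A\intersect\alpha\notin D_\alpha}$ is \emph{stationary} in $W[A]$: given a name $\dot C$ for a club and a condition, one builds a limit ordinal $\mu$ together with a condition forcing $\mu\in\dot C$ whose domain omits a set of $|\mu|$ many coordinates below $\mu$; since $|D_\mu|\leq|\mu|\lt 2^{|\mu|}$, one then fills in these reserved coordinates so that $A\intersect\mu\notin D_\mu$, while the membership $\mu\in\dot C$, depending only on the coordinates already used, is preserved. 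Finally, the remaining Cohen coordinates constitute $\ltkappa$-closed forcing over $W[A]$, which preserves stationary subsets of $\kappa$, so $E$ stays stationary in $V[G]$; hence $A$ is captured on no club, contradicting $\Diamond^*_\kappa$.

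For clause (2), the plan is to add a $\Diamond^*_{\kappa^+}$-sequence by approximations: conditions are sequences $\langle D_\alpha\mid\alpha\lt\gamma\rangle$ with $\gamma\lt\kappa^+$, each $D_\alpha$ a family of at most $|\alpha|$ subsets of $\alpha$, ordered by end-extension. A descending sequence of length $\leq\kappa$ has union of length $\lt\kappa^+$ (a sup of $\leq\kappa$ ordinals below the regular $\kappa^+$), so the forcing is $\ltkappa^+$-closed; counting conditions gives size at most $2^{\kappa^+}$. I would then argue that the generic sequence $\vec D$ captures every $A\of\kappa^+$ of $V[G]$: given a name $\dot A$ and a condition, use $\ltkappa^+$-closure to decide longer and longer initial segments of $\dot A$, reach a limit level $\delta$, and extend so as to place $A\intersect\delta$ into $D_\delta$, which shows the capture set unbounded.

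The main obstacle, and the only genuinely delicate point, is upgrading this to capture on a \emph{club} rather than merely an unbounded or stationary set, which is exactly the feature separating $\Diamond^*$ from ordinary $\Diamond$. I would handle it by a reflection argument: for a name $\dot A$ there is a club of $\delta\lt\kappa^+$ such that $\dot A\intersect\delta$ is decided piecewise by conditions of length $\lt\delta$, and one arranges (by an auxiliary density argument at limit levels, forcing $D_\delta$ to absorb the limit thread $A\intersect\delta$) that every such $\delta$ is a capture point. The limit-level bookkeeping is carried out in \cite{CummingsForemanMagidor2001:SquaresScalesAndStationaryReflection,Brooke-Taylor2009:LargeCardinalsAndDefinableWellOrders}, to which I appeal for clause (2), and in \cite{Devlin1979:VariationsOnDiamond,Brooke-Taylor2009:LargeCardinalsAndDefinableWellOrders} for clause (1).
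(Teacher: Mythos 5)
The paper itself offers no proof of this lemma; both clauses are simply quoted from the literature, so there is no in-paper argument for you to match and any self-contained proof is welcome. Judged on its own terms, your clause (1) has the right architecture---reduce $\vec D$ to a submodel $W$ generated by $\kappa$ of the $\kappa^+$ coordinates, evade with a fresh coordinate $A$, and push stationarity of the evasion set from $W[A]$ to $V[G]$ using the fact that the remaining coordinates still form a $\ltkappa$-closed poset over $W[A]$ (all of which is sound)---but the central density argument has a gap. You cannot in general produce a condition forcing $\mu\in\dot C$ whose domain omits $|\mu|$ many coordinates below $\mu$: the conditions that force ordinals into $\dot C$ are whatever the dense set of deciders happens to contain, and for a name $\dot C$ that genuinely depends on the coordinates you hoped to reserve, every decider may occupy them; nor can you relocate a decider's domain by an automorphism without also moving the name $\dot C$. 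The standard repair is to make the free choices \emph{first}: build a fan of conditions $p_s$, $s\in 2^{\lt\lambda}$ for suitable $\lambda\lt\kappa$, splitting at designated coordinates and only then extending each node separately to capture an element of $\dot C$, synchronizing all nodes at each level to a common ordinal domain and using $\ltkappa$-closure at limits. At the top one gets more than $|\mu|\geq|D_\mu|$ pairwise distinct decisions of $A\cap\mu$, each forcing $\mu\in\dot C$, and at least one avoids $D_\mu$.

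Clause (2) has the more serious gap, which to your credit you flag yourself. The naive end-extension poset together with the density argument you describe yields only unbounded (or stationary) capture---essentially $\Diamond$ rather than $\Diamond^*$---and the proposed ``auxiliary density argument at limit levels'' cannot deliver club capture: once the generic has passed level $\delta$, the family $D_\delta$ is frozen, and no later dense set can retroactively insert $A\cap\delta$ into it; a dense set guarantees cofinally many good levels, never club many. The forcings in the cited sources avoid this by building commitments into the conditions (each condition carries a family of ``promises,'' sets that must be captured at every subsequent level), which is precisely the mechanism your sketch is missing; note also that this is what accounts for the size bound $2^{\kappa^+}$ in the statement, whereas your poset has only $2^\kappa\cdot\kappa^+$ conditions. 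Since the paper itself merely cites these results, deferring to the references is acceptable, but the write-up should not present the naive poset plus a density argument as if it sufficed.
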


Furthermore, the $\Diamond^*$ coding forcing in each case is locally homogeneous. The application of $\Diamond^*$ coding in this article requires that the coding is preserved by small forcing, so let us also note that here.

\begin{lemma}\label{Lemma.Diamond*InvariantBySmallForcing}
For any regular cardinal $\kappa$, the $\Diamond^*_\kappa$ hypothesis is invariant by small forcing. Furthermore, $\Diamond^*_\kappa$ is preserved by any forcing of size at most $\kappa$ preserving the regularity of $\kappa$.
\end{lemma}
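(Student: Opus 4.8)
The plan is to split the statement into a general \emph{preservation} claim---that any forcing $\P$ of size at most $\kappa$ preserving the regularity of $\kappa$ preserves $\Diamond^*_\kappa$ (this is the ``furthermore'', and it subsumes the forward half of invariance, since a forcing of size ${<}\kappa$ is $\kappa$-c.c.\ and hence preserves the regularity of $\kappa$)---together with a \emph{reflection} claim for the backward half of invariance, namely that if $\Card\P<\kappa$ and $\Diamond^*_\kappa$ holds in $V[G]$ then it already holds in $V$. For the preservation claim, let $\<D_\alpha\mid\alpha<\kappa>\in V$ be a $\Diamond^*_\kappa$-sequence. Fix in $V$ an enumeration $\P=\<p_i\mid i<\kappa>$ and a pairing bijection $\pi\colon\kappa\times\kappa\to\kappa$ whose closure points $\alpha$ (those with $\pi\image(\alpha\times\alpha)=\alpha$) form a club $C_\pi\in V$. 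For $d\of\kappa$ in $V$ let $\sigma_d$ be the $\P$-name $\set{(\check\xi,p_i)\st\pi(\xi,i)\in d}$. Every $A\of\kappa$ in $V[G]$ has a nice name $\dot A=\Union_{\xi<\kappa}\set{\check\xi}\times B_\xi$ with each $B_\xi$ an antichain (of size at most $\kappa$ by the $\kappa^+$-c.c.), which I code in $V$ by $\bar A=\pi\image\set{(\xi,i)\st p_i\in B_\xi}\of\kappa$. I then define in $V[G]$, for $\alpha\in C_\pi$, the family $D^*_\alpha=\set{\sigma_d[G]\st d\in D_\alpha}$, and $D^*_\alpha=\emptyset$ otherwise; since each $d$ codes a single name, $\Card{D^*_\alpha}\le\Card{D_\alpha}\le\Card\alpha$, as required.

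To verify the guessing, apply $\Diamond^*_\kappa$ in $V$ to $\bar A$ to obtain a club $C\in V$ with $\bar A\intersect\alpha\in D_\alpha$ throughout $C$, and note that for $\alpha\in C_\pi$ the set $\bar A\intersect\alpha$ codes the \emph{truncated} name $\sigma_{\bar A\intersect\alpha}=\set{(\check\xi,p_i)\st\xi,i<\alpha,\ p_i\in B_\xi}$, so that $\sigma_{\bar A\intersect\alpha}[G]=\set{\xi<\alpha\st\exists i<\alpha\ p_i\in B_\xi\intersect G}$. The main obstacle is exactly here: this truncated evaluation sees only conditions of index below $\alpha$, whereas a point $\xi\in A\intersect\alpha$ might be placed into $A$ by a condition of larger index, so $\sigma_{\bar A\intersect\alpha}[G]$ and $A\intersect\alpha$ need not coincide. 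The repair is to exploit that each $B_\xi$ is an antichain, so $G$ meets it in at most one condition; letting $f(\xi)$ be the index of that condition when it exists (and $0$ otherwise) gives $f\colon\kappa\to\kappa$ in $V[G]$, and this is precisely where preservation of the regularity of $\kappa$ is used: since $\kappa$ stays regular, the closure points of $f$ form a club $C_f$ in $V[G]$, and for $\alpha\in C_f$ every witness for a point of $A\intersect\alpha$ already has index ${<}\alpha$, so $\sigma_{\bar A\intersect\alpha}[G]=A\intersect\alpha$. On the club $C\intersect C_\pi\intersect C_f$ (a club in $V[G]$, as the ground-model clubs remain clubs there) we therefore get $A\intersect\alpha=\sigma_{\bar A\intersect\alpha}[G]\in D^*_\alpha$, establishing $\Diamond^*_\kappa$ in $V[G]$.

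For the reflection half, assume $\Card\P<\kappa$ and let $\<E_\alpha>\in V[G]$ be a $\Diamond^*_\kappa$-sequence with nice names $\dot E_\alpha$. I would define in $V$, for $\alpha\ge\Card\P$ (setting $D_\alpha=\emptyset$ below), the family $D_\alpha=\set{X\of\alpha\st X\in V\text{ and }\exists p\in\P\ (p\forces\check X\in\dot E_\alpha)}$. The bound $\Card{D_\alpha}\le\Card\alpha$ comes from counting: fixing below each $p$ a name for a surjection of $\Card\alpha$ onto $\dot E_\alpha$ (available since $p$ forces $\Card{\dot E_\alpha}\le\Card\alpha$), each $X\in D_\alpha$ is decided by some condition to be the image of some ordinal ${<}\Card\alpha$, so $X$ is determined by a pair in $\P\times\Card\alpha$, whence $\Card{D_\alpha}\le\Card\P\cdot\Card\alpha=\Card\alpha$. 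Finally, given $A\of\kappa$ in $V$, it lies in $V[G]$, so some club $C\in V[G]$ has $A\intersect\alpha\in E_\alpha$ throughout $C$; as $\P$ is $\kappa$-c.c.\ there is a club $C'\in V$ with $C'\of C$, and for $\alpha\in C'$ some condition of $G$ forces $\check{(A\intersect\alpha)}\in\dot E_\alpha$, giving $A\intersect\alpha\in D_\alpha$. Thus $\<D_\alpha>$ witnesses $\Diamond^*_\kappa$ in $V$, and together with the preservation claim this shows $\Diamond^*_\kappa$ is invariant by small forcing. I expect the cardinality bound on $D_\alpha$ in this last step, and the truncation-repair via the regularity of $\kappa$ in the preservation step, to be the only points requiring genuine care; the remaining verifications are routine absoluteness and nice-name bookkeeping.
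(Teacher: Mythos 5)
Your proposal is correct and follows essentially the same route as the paper: the backward half of invariance is proved by pulling the sequence back to $V$ via the sets of $X$ forced into $\dot E_\alpha$ by some condition, and the preservation half by coding nice names into a ground-model $\Diamond^*_\kappa$-sequence and evaluating truncated names. Your explicit club $C_f$ of closure points repairing the truncation mismatch is exactly the point the paper passes over with the phrase ``$(\dot X_\alpha)_G=X\cap\alpha$ for a club of $\alpha$,'' so you have simply made the same argument more careful.
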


\begin{proof} More specifically, what we claim is the following:
\begin{enumerate}
  \item For any small forcing extension $V[G]$, meaning that $G\of\P$ is $V$-generic for some $\P$ of size less than $\kappa$ in $V$, the principle  $\Diamond^*_\kappa$ holds in $V$ if and only if it holds in $V[G]$.
  \item If $\Diamond^*_\kappa$ holds in $V$ and $G\of\P$ is $V$-generic, where $\P$ has size at most $\kappa$ and $\kappa$ is regular in $V[G]$, then $\Diamond^*_\kappa$ continues to hold in $V[G]$.
\end{enumerate}
For (1), suppose that $\<D_\alpha \mid \alpha < \kappa>$ is a $\Diamond^*_\kappa$-sequence in $V[G]$, where $G\of\P$ is $V$-generic and $|\P|<\kappa$. Fix a name $\dot D$ for the sequence, forced by $\one$ via $\P$ to be a $\Diamond_\kappa^*$-sequence, and corresponding names $\dot D_\alpha$ for each $D_\alpha$. In $V$, for any infinite ordinal $\alpha$ above $|\P|$, let $B_\alpha$ consist simply of all those subsets $X\of\alpha$ for which there is some condition $p\in\P$ forcing $\check X\in\dot D_\alpha$. Since $\P$ is small, it follows that $B_\alpha$ has size at most $|\alpha|$, since any given condition can force at most $|\alpha|$ many sets into $\dot D_\alpha$. If $X\of\kappa$ is any subset of $\kappa$ in $V$, then there is a club $C\of\kappa$ in $V[G]$ such that $X\intersect\alpha\in D_\alpha$ for all $\alpha\in C$. Since these facts are forced by some condition, it follows that $X\intersect\alpha\in B_\alpha$ as well, and since the forcing is small, the club $C$ must contain a club in the ground model $V$, and so $\<B_\alpha\mid\alpha<\kappa>$ witnesses $\Diamond^*_\kappa$ in $V$, establishing the converse implication of statement (1).

The forward implication of (1) is generalized by statement (2), which we now prove. Suppose that $\<B_\alpha \mid \alpha<\kappa>$ is a $\Diamond^*_\kappa$-sequence in $V$ and $G\of\P$ is $V$-generic for forcing $\P$ of size at most $\kappa$ that preserves the regularity of $\kappa$. We may assume that $\P\of\kappa$. In the forcing extension $V[G]$, let $D_\alpha$ be the set of all subsets $X\of \alpha$ in $V[G]$ for which there is a nice $\P\intersect\alpha$-name $\dot X$ such that $\dot X_G=X$ and $\dot X$ is coded by an element of $B_\alpha$ (we use nice names simply to make the coding more transparent). If $X\of\kappa$ is in $V[G]$, then it has a nice $\P$-name $\dot X$, which is coded by a subset $X_0\of\kappa$ in $V$, and $X_0\intersect\alpha\in D_\alpha$ on a club of $\alpha<\kappa$. Since also $X_0\intersect\alpha$ codes a nice $\P\intersect\alpha$-name $\dot X_\alpha$ for subset of $\alpha$, on a club of $\alpha$, and furthermore $(\dot X_\alpha)_G=X\intersect\alpha$ for a club of $\alpha$, we have a club of $\alpha$ in $V[G]$ for which $X\intersect\alpha\in D_\alpha$, thereby witnessing $\Diamond^*_\kappa$ in $V[G]$.
\end{proof}

The following lemma is used in the proof of Theorem~\ref{Theorem.ProperClassMeasurablesNotLargeInHOD}.

\begin{lemma}\label{Lemma.MeasurableCCAcoding}
There is a class forcing notion $\P$ such that if $F\of\P$ is $V$-generic, then
 \begin{enumerate}
  \item $V$ and $V[F]$ have exactly the same measurable cardinals.
  \item The \GCH\ holds in $V[F]$ at every inaccessible cardinal.
  \item Every set of ordinals in $V[F]$ arises as the \GCH\ or $\Diamond^*$ pattern (as desired) on a block of cardinals of the form $\delta^{+++}$ for $\delta=\beth_\delta$.
  \item Every measurable cardinal $\kappa$ in $V[F]$ is indestructible by the forcing $\Add(\kappa,1)$.
 \end{enumerate}
By using $\Diamond^*$ coding and no \GCH\ coding, one can ensure \GCH\ in $V[F]$.
\end{lemma}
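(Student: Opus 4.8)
The plan is to build $\P$ as a single Easton-support (reverse Easton) class iteration that simultaneously accomplishes three standard preparations: forcing the \GCH, arranging indestructibility of measurable cardinals under $\Add(\kappa,1)$, and coding every set of ordinals into a definable pattern. First I would arrange, by a preliminary set forcing if necessary, that the \GCH\ holds in the ground model. I then define $\P=\<\P_\alpha,\Qdot_\alpha>$ so that $\Qdot_\alpha$ is nontrivial only in two cases. At each inaccessible cardinal $\alpha$ I force with $\Add(\alpha,1)$; this is the canonical forcing of the \GCH, which forces the \GCH\ to hold at every inaccessible cardinal and, by the standard master-condition lifting arguments (as in the preparation described after Theorem~\ref{Theorem.MeasurableNotMeasurableInHOD}, or the lottery preparation of \cite{Hamkins2000:LotteryPreparation}), renders every measurable cardinal indestructible by $\Add(\kappa,1)$. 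At each cardinal of the form $\alpha=\delta^{+++}$ with $\delta=\beth_\delta$, I insert the coding forcing that writes a designated set of ordinals into the \GCH\ or $\Diamond^*$ pattern (using Lemma~\ref{Lemma.Diamond*Coding}) on a block of successor cardinals near $\alpha$. A bookkeeping function assigns the sets to be coded to these coding blocks, arranged so that each set appearing at a bounded stage is coded at a later block; since every set of ordinals of $V[F]$ appears in some intermediate model $V[F_\alpha]$, this secures item~(3). Because the coding takes place only at successor cardinals $\delta^{+++}$, it does not disturb the continuum function at inaccessibles, so item~(2) survives; the codings are preserved through the remainder of the iteration since the higher stages are sufficiently closed to add no subsets to the coding cardinals (cf.\ Lemma~\ref{Lemma.Diamond*InvariantBySmallForcing}). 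Choosing $\Diamond^*$ coding exclusively, which is \GCH-preserving by Lemma~\ref{Lemma.Diamond*Coding}, yields the final remark that the full \GCH\ can be maintained in $V[F]$.

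For item~(4), I would observe that for any measurable $\kappa$ of $V[F]$ the iteration factors as $\P_\kappa*\Qdot_\kappa*\P^\kappa$, where $\Qdot_\kappa=\Add(\kappa,1)$ and the tail $\P^\kappa$ is $\leqkappa$-closed, hence $\leqkappa$-distributive over the intermediate model by Lemma~\ref{Lemma.ClosureDistributive} and adding no subsets of $\kappa$. The initial segment $\P_\kappa$ makes $\kappa$ indestructible by $\Add(\kappa,1)$. Forcing $\Add(\kappa,1)$ over $V[F]$ can then be rearranged, using Easton's lemma to commute the two copies of $\Add(\kappa,1)$ past the closed tail and absorbing them into one, as a single $\Add(\kappa,1)$ over $V[F_\kappa]$ followed by the closed tail; so indestructibility is inherited by $V[F]$.

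The heart of item~(1) splits into preservation of ground-model measurables and non-creation of new ones. For preservation I would run the Kunen--Paris lifting argument used in Theorem~\ref{Theorem.ProperClassMeasurablesNotLargeInHOD}: fix a normal ultrapower $j\colon V\to M$ with critical point $\kappa$, factor $j(\P_\kappa)$ as $\P_\kappa\times\P_{\kappa,j(\kappa)}$ with the second factor $\leqkappa$-closed in $M$, build an $M$-generic filter for it by diagonalization (using $M^\kappa\of M$ and counting the $\kappa^+$ dense sets), and lift to $j\colon V[F_\kappa]\to M[j(F_\kappa)]$; the forcing at and above $\kappa$ is then absorbed via item~(4) and the closure of the tail. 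For non-creation I would invoke the gap forcing theorem of \cite{Hamkins2003:ExtensionsWithApproximationAndCoverProperties}: the iteration $\P$ (together with the preliminary \GCH\ forcing) admits a closure point below every cardinal, since between consecutive coding blocks and inaccessible stages the forcing is highly closed while the earlier segment is small. Hence any cardinal measurable in $V[F]$ is already measurable in $V$, so the three models $V\of V[F]$ have exactly the same measurable cardinals.

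The main obstacle I anticipate is the coordination between the self-referential coding and the closure-point requirement: the forcing must be dense enough to code cofinally many sets of the final model, yet structured enough to present a closure point below every potential measurable, and the bookkeeping must guarantee that a set of ordinals appearing only at a late stage of the class iteration is nonetheless captured at a still-later coding block. Verifying that these demands are compatible---that the closed intervals between the isolated coding blocks genuinely yield closure points for the gap forcing theorem while the bookkeeping still reaches every set of ordinals---is the delicate point at the center of the argument.
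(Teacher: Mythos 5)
Your proposal follows the paper's architecture closely---an Easton-support class iteration forcing the \GCH\ with Cohen subsets added at inaccessible stages to secure indestructibility, a Kunen--Paris style lifting for preservation of measurables, the closure-point theorem of \cite{Hamkins2003:ExtensionsWithApproximationAndCoverProperties} to rule out new measurables, and Easton's lemma to absorb the closed tail---but it diverges on exactly the point you yourself flag as the delicate center of the argument: how the coding of item~(3) is arranged. The paper uses no bookkeeping function. At each coding stage $\eta=\delta^{+++}$ with $\delta=\beth_\delta$ it forces with the \emph{lottery sum} of $\Add(\eta^+,1)$, which forces the \GCH\ at $\eta$, and $\Add(\eta,\eta^{++})$, which forces its failure, so that the generic filter itself freely selects the \GCH\ pattern at every coding point; then, for any set $A$ of ordinals in $V[F]$, the conditions opting for the lottery summands that write the characteristic function of $A$ onto a later block of coding points form a dense class, and genericity does the rest. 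This dissolves the self-reference problem you identify: there is no need to anticipate in $V$ which sets of the final model must be coded, and no tension with the closure point, which lies below the first inaccessible and is unaffected by the lottery stages. Two smaller adjustments to your version: the statement requires the pattern to appear on the cardinals $\delta^{+++}$ themselves, not on auxiliary blocks of successor cardinals ``near'' them, so the coding placement should be changed to match; and if you insist on bookkeeping, it must assign names rather than sets to coding blocks and must be shown to exhaust all nice names for sets of ordinals of the class-generic extension---precisely the complication the lottery device avoids.
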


The proof of the lemma is extremely flexible, and one can use different coding points.

\begin{proof}Let $\P$ be the Easton-support proper-class forcing iteration that forces the \GCH\ at all infinite cardinals (including adding a Cohen subset at inaccessible cardinal stages), except that at stage $\eta=\delta^{+++}$ where $\delta=\beth_\delta$,
the forcing uses the lottery sum of the forcing $\Add(\eta^+,1)$, which forces the \GCH\ at $\eta$, with the forcing $\Add(\eta,\eta^{++})$, which forces a violation of the \GCH\ at $\eta$. Suppose that $F\of\P$ is $V$-generic, and consider the extension $V[F]$. The standard arguments show that every measurable cardinal $\kappa$ of $V$ is preserved to $V[F]$, and furthermore becomes indestructible by $\Add(\kappa,1)$ (and this is why we add a Cohen set at inaccessible stages). The main results of \cite{Hamkins2003:ExtensionsWithApproximationAndCoverProperties} show that the forcing creates no new measurable cardinals, since the iteration admits a closure point below the first inaccessible cardinal. The forcing ensures the \GCH\ at all cardinals except those coding points, and a simple density argument shows that every set of ordinals in the extension is coded into the \GCH\ pattern at those coding points. So the extension exhibits all the desired features. By using $\Diamond^*$ coding instead of \GCH\ coding, an essentially similar argument also achieves \GCH\ in $V[F]$.
\end{proof}

A cardinal $\kappa$ is {\df $(\Sigma_2,0)$-extendible} if $V_\kappa\elesub_{\Sigma_2} V_\theta$ for some ordinal $\theta$ (see~\cite{BagariaHamkinsTsaprounisUsuba:SuperstrongAndOtherLargeCardinalsAreNeverLaverIndestructible}).

\begin{observation}[{\cite[Theorem~4]{ApterFriedman:HODSupercompactnessIndestructibilityLevelByLevelEquivalence}}]\label{Observation.IndestructibleImpliesV_kappaSubsetHOD}
 If $\kappa$ is a Laver indestructible supercompact cardinal, then $V_\kappa\of\HOD$. In particular, in this case there are many measurable and partially supercompact cardinals in \HOD. More specifically, if $\kappa$ is Laver indestructibly $\Sigma_2$-reflecting, or even merely indestructibly $(\Sigma_2,0)$-extendible, then $V_\kappa\of\HOD$.
\end{observation}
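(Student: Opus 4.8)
The plan is to show that every element of $V_\kappa$ is ordinal definable in $V$; since $V_\kappa$ is transitive, an element is hereditarily ordinal definable as soon as it is ordinal definable, and each $x\in V_\kappa$ is coded by a bounded set of ordinals, so it suffices to fix $a\of\gamma$ with $\gamma<\kappa$ and place $a$ in $\HOD^V$. The key idea is to force \emph{above} $\kappa$ in a way that does not disturb $V_\kappa$, exploit indestructibility to keep the reflecting property of $\kappa$, and then \emph{reflect a coding of $a$ back down below $\kappa$}, where it must already have lived in $V$. Concretely, I would let $\R_a$ be the $\ltkappa$-directed closed forcing that codes $a$ into the \GCH\ or $\Diamond^*$ pattern on a canonical block of regular cardinals above $\kappa$ (as in Lemma~\ref{Lemma.Diamond*Coding} and the coding discussion), and take $G\of\R_a$ to be $V$-generic. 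Because $\R_a$ is $\ltkappa$-closed with all nontrivial forcing above $\kappa$, we have $V_\kappa^{V[G]}=V_\kappa^V$, and the continuum function and $\Diamond^*$ pattern below $\kappa$ are unchanged.

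The heart of the argument is a single reflection. Consider the assertion $\Psi(a)$: \emph{there is a cardinal $\delta$ and a block of regular cardinals above $\delta$ whose \GCH\ $($or $\Diamond^*)$ pattern codes $a$.} Since the continuum function is $\Delta_2$, $\Psi$ is locally verifiable, hence a $\Sigma_2$ property of its parameter, and crucially $a$ is its \emph{only} parameter, with $a\in V_\kappa$. By the indestructibility hypothesis, $\kappa$ remains $(\Sigma_2,0)$-extendible in $V[G]$, so there is $\theta$ with $V_\kappa^{V[G]}\elesub_{\Sigma_2}V_\theta^{V[G]}$; taking $\theta$ above the coding block we get $V_\theta^{V[G]}\satisfies\Psi(a)$, witnessed by $\delta=\kappa$ together with the block we just forced. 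By $\Sigma_2$-elementarity this reflects downward, giving $V_\kappa^{V[G]}\satisfies\Psi(a)$: inside $V_\kappa^{V[G]}$ there already exist some cardinal $\delta<\kappa$ and a coding block \emph{below} $\kappa$ whose pattern codes $a$. Note that this is exactly where indestructibility is indispensable: in $V$ itself $\Psi(a)$ need not hold, so we first force $\Psi(a)$ to be true by coding above $\kappa$, and reflection then manufactures a coding below $\kappa$.

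It remains to transfer this back to $V$. Since $V_\kappa^{V[G]}=V_\kappa^V$ and, $\kappa$ being inaccessible, $V_\kappa=H_\kappa$ computes the continuum function correctly below $\kappa$, the witnessing block below $\kappa$ codes $a$ in $V$ as well. Hence $a$ is definable in $V$ from the ordinal $\delta$, so $a\in\HOD^V$; as $a$ was arbitrary, $V_\kappa\of\HOD$. For the ``in particular'' clause, if $\mu<\kappa$ is measurable in $V$ then a normal measure on $\mu$ lies in $P(P(\mu))\of V_\kappa\of\HOD$, and since $P(\mu)^{\HOD}=P(\mu)^V$ it remains a measure in \HOD; the same applies to partially supercompact $\mu<\kappa$, of which supercompactness of $\kappa$ supplies a proper class below $\kappa$. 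Finally, the reductions among the stated hypotheses are routine: a Laver indestructible supercompact cardinal is indestructibly $\Sigma_2$-reflecting (supercompact cardinals are $\Sigma_2$-reflecting), which in turn is indestructibly $(\Sigma_2,0)$-extendible, so the weakest hypothesis suffices and drives the whole proof. I expect the main obstacle to be the careful bookkeeping of the reflection: one must check that $\Psi(a)$ is genuinely $\Sigma_2$ with $a$ as its sole parameter, so that no forbidden parameter such as $\kappa\notin V_\kappa$ sneaks in to block the descent, and that $\R_a$ truly fixes $V_\kappa$ and the low continuum function, so that the coding obtained below $\kappa$ in $V[G]$ is genuinely a coding in $V$.
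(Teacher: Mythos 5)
Your proposal is correct and follows essentially the same route as the paper's proof: force with $\ltkappa$-directed closed coding above $\kappa$, use the indestructible $(\Sigma_2,0)$-extendibility to reflect the $\Sigma_2$ assertion ``$a$ is coded into the \GCH\ (or $\Diamond^*$) pattern somewhere'' from $V_\theta^{V[G]}$ down to $V_\kappa^{V[G]}=V_\kappa^V$, and conclude that $a$ was already ordinal definable in $V$. The only cosmetic difference is that you code one set $a$ at a time, whereas the paper codes all of $V_\kappa$ in a single forcing; your added care about the parameter of $\Psi(a)$ lying in $V_\kappa$ and about $V_\kappa$ computing the continuum function correctly is exactly the right bookkeeping.
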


\begin{proof} Suppose that $\kappa$ is Laver indestructibly $(\Sigma_2,0)$-extendible. Let $\Q$ be $\ltkappa$-closed forcing that forces to code every set of ordinals in $V_\kappa$ into the \GCH\ pattern at the next $\kappa$ many regular cardinals above $\kappa$. This can be done with $\ltkappa$-directed closed forcing, and so $\kappa$ remains $(\Sigma_2,0)$-extendible in the corresponding forcing extension $V[G]$, and so there is an ordinal $\theta>\kappa$ such that $V_\kappa\elesub_{\Sigma_2} V_\theta^{V[G]}$. Note that $\theta$ must be large enough that all the coding performed by $G$ is available in $V_\theta^{V[G]}$. Thus, any set $x\in V_\kappa$ is coded into the \GCH\ pattern of $V_\theta^{V[G]}$, and since this is a $\Sigma_2$ expressible property, it follows that $x$ is already coded into the \GCH\ pattern in $V_\kappa$. Thus, $x$ is ordinal definable in $V$, and so $V_\kappa\of\HOD$, as desired.
\end{proof}

\bibliographystyle{alpha}
\bibliography{MathBiblio,HamkinsBiblio}

\end{document}